\numberwithin{equation}{section}
\theoremstyle{definition}
\numberwithin{equation}{section}
\newtheorem{theorem}{\bf Theorem}[section]
\newtheorem{remark}{\bf Remark}[section]
\newtheorem{proposition}{Proposition}[section]
\newtheorem{lemma}{Lemma}[section]
\newtheorem{example}{Example}[section]
\newtheorem{definition}{Definition}[section]
\newtheoremstyle
{remarkstyle}
{}
{11pt}
{}
{}
{\bfseries}
{:}
{     }
{\thmname{#1} \thmnumber{#2} }
\theoremstyle{remarkstyle}
\begin{document}
	\title{Generalized birth-death process on Finite Lattice}
	\author[Pradeep Vishwakarma]{Pradeep Vishwakarma}
	\address{P. Vishwakarma, Department of Mathematics,
		Indian Institute of Technology Bhilai, Durg, 491002, INDIA.}
	\email{pradeepv@iitbhilai.ac.in}
	\author{Kuldeep Kumar Kataria}
	\address{K. K. Kataria, Department of Mathematics,
		 Indian Institute of Technology Bhilai, Durg, 491002, INDIA.}
	 \email{kuldeepk@iitbhilai.ac.in}

	\subjclass[2020]{Primary : 60J05; Secondary: 60J10, 60J80}
	
	\keywords{ generalized birth-death process; reversible Markov chain; regime switching; commuting variety; block diagonal matrix; spectral decomposition}
	\date{\today}
	
	\maketitle
	\begin{abstract}
		We consider a generalized birth-death process (GBDP) whose state space is a finite subset of a $q$-dimensional lattice. It is assumed that there can be a jump of finite step size in all possible directions such that the probability of simultaneous transition in more than one direction is zero. Such processes are of interest as their transition probability matrix is diagonalizable under suitable conditions. Thus, their $k$-step transition probabilities can be  efficiently obtained. For GBDP on two-dimensional finite grid, we obtain a sufficient and necessary condition for the vertical and horizontal transition probability matrices to commute. Later, we extend these results to the case of $q$-dimensional finite grid. We obtain the minimum number of constraints required on transition probabilities that ensure the commutation of transition probability matrices for each possible direction.
	\end{abstract}
	
\section{Introduction} A birth-death process (BDP) is a Markov process that models the number of individuals in a population over a period of time.  The BDP arises in many real life situations, for example, in ecology, epidemic, servicing systems, \textit{etc}. In BDP, at any particular instance there is a transition of unit size in either forward or backward direction with positive probability such that the probability of any other kind of transition is zero. Let $S=\{i_0,i_1,i_2,\dots\}$ be the state space of BDP, and let $p_{ij}$, $\{i,j\}\subset S$ denote its one step transition probability from state $i$ to state $j$. If we observe the process for finite time then the one step transition probability matrix $P=(p_{ij})$ is tri-diagonal. So, the matrix $P$ can be diagonalized using a family of orthogonal polynomials (see Karlin and McGregor (1955), (1959)). In such cases, the $k$-step transition probability matrix $P^{(k)}$ of BDP can be computed more efficiently.
In real life situation, it is natural to consider a discrete-time BDP whose state space is finite-dimensional integer lattice. For example, a queuing system  where there are multiple servers such that each server has different set of customers and they follow their corresponding queue. In ecology, it is of interest to study the growth of individual for each genotype in a population with different types of genotype.

Further, there exist higher-dimensional birth-death models where one of the coordinates represents the main quantity of interest and the remaining coordinates indicate the background or changing environment that affects its dynamics. This perspective is adopted in some quasi birth-death processes (see  Latouche and Ramaswami (1999), Neuts (1994)) in which the state of primary variable is referred as  level and that of subsidiary variable is referred as phase.
 These types of processes are used to model queues in random environment and the queues in which service and inter-arrival times follow phase type distributions. Although, these settings are usually in continuous time but are applicable for discrete time as well. For example, Gr\"unbaum (2008) studied these types of processes in the discrete time setting and discussed their relations to the matrix valued orthogonal polynomials. Moreover, there are extensive literature related to such processes in the fields of finance, economics, engineering, \textit{etc}. In such cases, these models are commonly referred as regime switching or stochastic volatility models (see Genon-Catalot \textit{et al.} (1999), (2000)). In practice, the discrete time and discrete state space approximations for continuous cases are frequently used in numerical computations. For more details on such processes, we refer the reader to Ephraim and Merhav (2002), Ng and Yang (2006), Yin \textit{et al.} (2004), Zhou and Yin (2003), and references therein.
	
 Like simple birth-death model, the higher-dimensional birth-death processes too have one step transitions but are not necessarily reversible. Consequently, there is no guarantee of a straightforward spectral decomposition using orthogonal polynomials. It is natural to look for cases where certain aspects of the one-dimensional theory hold. One such case occurs when it is possible to write the transition probability matrix  as the sum of pairwise commutative matrices, each representing transitions in a specific coordinate direction. 

Recently, Evans \textit{et al.} (2010) studied a birth-death process, namely, the commuting birth-death process whose state space is a finite dimensional finite grid. Using techniques from combinatorics and algebraic statistics, they derived the algebraic constraints for commuting matrices. Moreover, they have shown that the spectral decomposition methods can be utilized to obtain the $k$-step transition probabilities for a birth-death process on a two-dimensional finite grid. Also, they extended these results to the case of higher-dimensional finite grid. In this case, assuming that all  transition probabilities are positive, they obtained a unique parametrization for the commuting model. Under the same assumption, they identified the minimum number of constraints required on the transition probabilities to ensure the commutation of transition probability matrices in every possible directions. Also, they study the binomial ideal generated by the commutation condition using methods from algebraic statistics and matroid theory.

The continuous-time and discrete state space growth processes with variable step size transitions have been the topic of study for many years. For example, Di Crescenzo {\it et al.} (2016) studied a generalization of the Poisson process, namely, the generalized counting process. Unlike the Poisson process, in the generalized counting process there is a possibility of a jump of some finite step size at any point of time. Kataria and Khandakar (2021) introduced and studied a generalized pure birth process in which multiple births can happen simultaneously. The idea of multiple births  in BDP was introduced by Ramakrishnan and Srinivasan (1958) where they studied the age distribution of twins as a particular case. Doubleday (1973) introduced and studied a linear birth-death process with multiple birth in continuous-time space where he studied the distributional properties of the linear birth-death process and obtained the explicit form of the extinction probability. Recently, Vishwakarma and Kataria (2024) generalized the BDP to a case in which there is a possibility of finite step transition in an infinitesimal interval. That is, at any time instant finitely many births or finitely many deaths are possible with positive probability. However, it is assumed that the chance of simultaneous birth and death is negligible.

In this paper, we extend the results of Evans \textit{et al.} (2010) to the case of a generalized birth-death model. The model studied in their paper is a particular case of the process discussed in this work. Let us consider a birth-death process whose state space is a finite subset of finite dimensional lattice. Here, it is assumed that at any instance there can be multiple but finitely many transition in every possible direction. Also, we assume that at any point of time transition will take place in exactly one direction with some positive probability and the probability of simultaneous transition in more than one directions is zero. Moreover, the probability of self transition at any state is assumed to be zero.

  Let $\textbf{e}_i(x)\in\mathbb{N}_0^q$ be such that its $i$th coordinate is $x\in\mathbb{N}$ and zero elsewhere. Here, $\mathbb{N}_0^q$ is the set of non-negative integer valued $q$-dimensional vectors and $\mathbb{N}$ denotes the set of positive integers. Also, let  us consider a $q$-dimensional finite grid $
	G^{(q)}_{n_1,n_2,\dots,n_q}=\{0,1,\dots,n_1\}\times\{0,1,\dots,n_2\}\times\dots\times\{0,1,\dots,n_q\},
$
and let
$l_1>0$, $l_2>0$ be such that $\max\{l_1,l_2\}<\min\{n_1,n_2,\dots,n_q\}$. We consider a discrete-time birth-death process $X=\{X_k\}_{k\ge0}$ with state space $G^{(q)}_{n_1,n_2,\dots,n_q}$. We call it the generalized birth-death process (GBDP). For $\{\textbf{u}, \textbf{v}\}\subset G^{(q)}_{n_1,n_2,\dots,n_q}$, its transition probabilities are given by
\begin{equation*}
	\mathrm{Pr}\{X_{k+1}=\textbf{v}|X_k=\textbf{u}\}=\begin{cases}
		p(\textbf{u},\textbf{v}),\ \textbf{v}-\textbf{u}=\textbf{e}_i(x),\ i\in\{1,2,\dots,q\},\ x\in\{1,2,\dots,l_1\},\\
		p(\textbf{u},\textbf{v}),\ \textbf{v}-\textbf{u}=-\textbf{e}_i(y),\ i\in\{1,2,\dots,q\},\ y\in\{1,2,\dots,l_2\},\\
		0,\ \mathrm{otherwise},
	\end{cases}
\end{equation*}
where $p(\textbf{u},\textbf{v})$ denotes the one-step transition probability of $X$ from state $\textbf{u}$ to state $\textbf{v}$.

 First, we consider the case of $q=2$ and then extend the results for $q>2$. It is shown that the spectral decomposition method for a real symmetric matrix can be used to obtain the $k$-step transition probability matrix  if and only if $l_1=l_2$.

The GBDP has the following potential real life applications:\\
\noindent {(i)}
The birth-death process with multiple transitions is a potentially applicable mathematical model in biology, especially for studying cell replications of more than one type of cells. During cell division, a single cell, known as the mother cell, splits into two daughter cells. Prokaryotic cells are simple cells that divide through binary fission whereas eukaryotic cells are more complex and undergo a process of nuclear division followed by cell division.\\
\noindent {(ii)} It can be used to model a queuing
system consisting of multiple servers with different sets of customers. At each server, the customers  arrive in bulk at any point of time and
follow their corresponding queue.\\
\noindent {(iii)} It can be applied to stock market to analyze the unpredictable fluctuations in day-to-day stock prices of multiple stocks.

The paper is organized as follows: 

First, we recall some definitions and known results from the matrix theory, Markov chains and graph theory in the next section.

In Section \ref{sec2}, we consider the GBDP whose state space is a two dimensional finite grid. We assume that it can incur transitions of finite sizes in rightward, upward, downward or leftward directions with some positive probabilities. Moreover, the probability of transition in two or more directions simultaneously is zero. We obtain the system of constraints under which the horizontal and vertical transition probability matrices commute. It is shown that the spectral decomposition method can be used to obtain the $k$-step transition probability matrix if and only if the maximum size of transition is equal for each direction. We observe that if the self transition occurs with positive probability for every state then the self transition probability matrix commute with horizontal and vertical transition probability matrices if and only if the process has the same self transition probability for every state. Further, we obtain the conditions under which the transition probability matrix of GBDP is stochastic.

In Section \ref{sec3}, we consider the GBDP whose state space is a $q$-dimensional finite grid.  A system of constraints is derived under which the transition probability matrices for every direction commute with each other. And, the size of maximal set with independent constraints is obtained. 

In the concluding section, we summarize the obtained results and give some motivations for further study in this direction.

\section{Preliminaries} Here, we collect some definitions and known results that will be used in this paper. First, we fix some notations and recall some results regarding matrices (see  Horn and Johnson (2013)).

\subsection{Notations} Let $\mathbb{R}$ denote the set of real numbers and $\mathbb{N}_0=\mathbb{N}\cup\{0\}$ denote the set of non-negative integers where $\mathbb{N}$ denotes the set of natural numbers. Also, let $M_n(\mathbb{R})$, $n\ge1$ be the set of all $n\times n$ real matrices and $M_{m\times n}(\mathbb{R})$ denote the set of all $m\times n$ real matrices.

 The following result is known as the spectral theorem for real symmetric matrices:
\begin{theorem}\label{pthm1}
	A matrix $U\in M_n(\mathbb{R})$ is symmetric if and only if there exist an orthogonal matrix $O\in M_n(\mathbb{R})$ and a diagonal matrix $D\in M_n(\mathbb{R})$ such that $U=ODO^T$, where $O^T$ denotes the transpose of matrix $O$.
\end{theorem}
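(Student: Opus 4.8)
The plan is to prove the two implications separately; the reverse implication is a one-line verification, while the forward implication (symmetry $\Rightarrow$ diagonalizability by an orthogonal conjugation) is the substantial part, which I would prove by induction on $n$. For the reverse direction, suppose $U=ODO^{T}$ with $O$ orthogonal and $D$ diagonal. Then $U^{T}=(ODO^{T})^{T}=OD^{T}O^{T}=ODO^{T}=U$, since every diagonal matrix equals its own transpose; hence $U$ is symmetric.

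For the forward direction I would induct on $n$, the base case $n=1$ being immediate with $O=(1)$ and $D=U$. The crux of the inductive step — and the step where I expect all the real work to lie — is producing a single \emph{real} eigenpair of the real symmetric matrix $U$. I would obtain it variationally: the Rayleigh quotient $x\mapsto x^{T}Ux$ is continuous on the unit sphere $\{x\in\mathbb{R}^{n}:x^{T}x=1\}$, which is compact, so it attains a maximum at some unit vector $v_1$, and a Lagrange-multiplier argument (or a direct first-order perturbation of the constrained maximum) shows $Uv_1=\lambda_1 v_1$ for the associated multiplier $\lambda_1\in\mathbb{R}$. Alternatively one can invoke the fundamental theorem of algebra to get a (possibly complex) eigenvalue and then check, using $U=U^{T}$ together with the identity $\overline{\bar v^{T}Uv}=\bar v^{T}Uv$ and $\bar v^{T}v>0$, that the eigenvalue is in fact real and admits a real eigenvector; the variational route is preferable here because it avoids complex numbers entirely.

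With a unit real eigenvector $v_1$ and real eigenvalue $\lambda_1$ in hand, I would extend $\{v_1\}$ to an orthonormal basis $v_1,\dots,v_n$ of $\mathbb{R}^{n}$ by Gram--Schmidt and set $Q=[\,v_1\mid\cdots\mid v_n\,]\in M_n(\mathbb{R})$, which is orthogonal. Then $Q^{T}UQ$ is symmetric and its first column is $Q^{T}Uv_1=\lambda_1 Q^{T}v_1=\lambda_1 e_1$, so by symmetry its first row is $\lambda_1 e_1^{T}$, giving
\[
Q^{T}UQ=\begin{pmatrix}\lambda_1 & 0\\ 0 & U'\end{pmatrix}
\]
with $U'\in M_{n-1}(\mathbb{R})$ symmetric. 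Applying the inductive hypothesis to $U'$ yields an orthogonal $O'\in M_{n-1}(\mathbb{R})$ and a diagonal $D'\in M_{n-1}(\mathbb{R})$ with $U'=O'D'(O')^{T}$. Finally, putting
\[
O=Q\begin{pmatrix}1 & 0\\ 0 & O'\end{pmatrix},\qquad D=\begin{pmatrix}\lambda_1 & 0\\ 0 & D'\end{pmatrix},
\]
one checks that $O$ is orthogonal (a product of orthogonal matrices), $D$ is diagonal, and $U=Q(Q^{T}UQ)Q^{T}=ODO^{T}$, which closes the induction. Everything except the existence of the real eigenpair is routine linear algebra, so that is the one point on which I would concentrate the argument.
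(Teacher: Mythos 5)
Your proof is correct, but there is nothing in the paper to compare it against: Theorem \ref{pthm1} is stated in the Preliminaries as a known result (the spectral theorem for real symmetric matrices) and is simply cited from Horn and Johnson (2013) without proof, since it is only used later as a tool to write down $k$-step transition probabilities via eigen-decompositions of the symmetric tri-diagonal blocks $U^h$, $U^v$, $Q^h$, $Q^v$. What you supply is the standard self-contained argument: the easy direction by transposing $ODO^T$, and the substantive direction by induction on $n$ with deflation, where the only nontrivial input is the existence of a real eigenpair, which you correctly obtain either variationally (maximizing the Rayleigh quotient $x\mapsto x^TUx$ on the compact unit sphere and applying a Lagrange-multiplier/first-order argument) or by extracting a complex eigenvalue from the characteristic polynomial and using $\bar v^TUv\in\mathbb{R}$ together with $\bar v^Tv>0$ to force it to be real with a real eigenvector; the block reduction $Q^TUQ=\begin{pmatrix}\lambda_1&0\\0&U'\end{pmatrix}$ and the assembly of $O$ and $D$ are routine and correctly executed. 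So your argument fills in a proof the paper deliberately omits, and it does so along the classical textbook route; it buys self-containedness at the cost of reproving a standard fact, whereas the paper's choice to cite the result keeps the preliminaries short and puts the emphasis on the commutation structure of $P^h$ and $P^v$, which is where the paper's actual contribution lies.
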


Also, we have the following remarks regarding the rank of a real matrix:
\begin{remark}\label{prem1}
 If $U\in M_{n\times m}(\mathbb{R})$ then $\mathrm{rank}(UU^T)=\mathrm{rank}(U)$.
\end{remark} 
\begin{remark}\label{prem2}
	Let $U\in M_n(\mathbb{R})$ be a block diagonal matrix. Then, the rank of $U$ is equal to the sum of ranks of matrices in the main diagonal of $U$.
\end{remark}
\begin{definition}(\textbf{Irreducible matrix}).
	A matrix $U\in M_n(\mathbb{R})$ is reducible if there exist a permutation matrix $S\in M_n(\mathbb{R})$ such that
	\begin{equation*}
		S^TUS=\begin{pmatrix}
			A&B\\
			0_{(n-m)\times m}&C
		\end{pmatrix},\ 1\leq m\leq n-1,
	\end{equation*}
	where $A$, $B$ and $C$ are real matrices of appropriate orders and $0_{(n-m)\times m}$ is a zero matrix of order $(n-m)\times m$.
	
	A matrix $U\in M_n(\mathbb{R})$ is called irreducible if it is not reducible.
\end{definition}

The following result for a real non-negative and irreducible matrix is known as the Perron-Frobenius theorem:
\begin{theorem}\label{pthm2}
	Let $U\in M_n(\mathbb{R})$ be an irreducible and non-negative matrix, and let $\rho(U)$ be the eigenvalue of $U$ with largest absolute value. Then, $\rho(U)>0$ and there exist a unique positive vector $\textbf{v}=(v_1,v_2,\dots,v_n)\in\mathbb{R}^n$ such that $U\textbf{v}=\rho(U)\textbf{v}$ and $v_1+v_2+\dots+v_n=1$.
\end{theorem}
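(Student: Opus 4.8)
\emph{Proof proposal.} The plan is to give the classical variational (Collatz--Wielandt) proof, using the reducibility criterion from the Definition above as the sole structural input. The one lemma I would record first is that $U$ irreducible forces every entry of $(I+U)^{n-1}$ to be strictly positive. Writing $(I+U)^{n-1}=\sum_{k=0}^{n-1}\binom{n-1}{k}U^{k}$, its $(i,j)$ entry is positive as soon as $(U^{k})_{ij}>0$ for some $0\le k\le n-1$; irreducibility says the digraph on $\{1,\dots,n\}$ with an arc $i\to j$ whenever $U_{ij}\ne 0$ is strongly connected, hence contains a directed $i$-to-$j$ walk of length at most $n-1$ (length $0$ when $i=j$), which supplies such a $k$. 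We may assume $n\ge 2$, the one-dimensional case being trivial.

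Next I would work with $f(x)=\min\{(Ux)_{i}/x_{i}:x_{i}>0\}$ on the simplex $\Delta=\{x\in\mathbb{R}^{n}:x\ge 0,\ \sum_{i}x_{i}=1\}$ and set $r=\sup_{x\in\Delta}f(x)$. Since $f$ is only upper semicontinuous on $\Delta$ and genuinely discontinuous on $\partial\Delta$, I would not invoke the extreme value theorem directly. Instead, for $x\in\Delta$ one has $Ux\ge f(x)\,x$ coordinatewise, and applying the order-preserving matrix $(I+U)^{n-1}$, which commutes with $U$, yields $Uy\ge f(x)\,y$ with $y=(I+U)^{n-1}x>0$; hence $f(y/\|y\|_{1})\ge f(x)$. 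Therefore $r$ equals the supremum of $f$ over the set $K=\{(I+U)^{n-1}x/\|(I+U)^{n-1}x\|_{1}:x\in\Delta\}$, which is the continuous image of a compact set, hence compact, and consists of strictly positive vectors, on which $f$ is continuous. So $r=f(v)$ for some $v>0$.

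Then I would upgrade $Uv\ge rv$ to an equality and identify $r$ with $\rho(U)$. If $Uv-rv$ were nonzero (and nonnegative), then $(I+U)^{n-1}(Uv-rv)>0$ by the lemma, i.e. $Uv'>rv'$ with $v'=(I+U)^{n-1}v>0$, giving $f(v')>r$, a contradiction; thus $Uv=rv$. Since an irreducible $U\in M_{n}(\mathbb{R})$ with $n\ge 2$ has no zero row, $Uv>0$, so $r>0$. For an arbitrary eigenvalue $\lambda$ of $U$ with eigenvector $u\ne 0$, taking absolute values coordinatewise in $\lambda u=Uu$ gives $|\lambda|\,|u|\le U|u|$, whence $|\lambda|\le f(|u|/\| |u| \|_{1})\le r$; as $r$ is itself an eigenvalue of $U$, this yields $\rho(U)=r>0$. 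For uniqueness it suffices, by splitting into real and imaginary parts, to treat a real eigenvector $w$ for $r$: put $t=\max_{i}w_{i}/v_{i}$, so that $z=tv-w\ge 0$ has a zero coordinate and satisfies $Uz=rz$, hence $(I+U)^{n-1}z=(1+r)^{n-1}z$; were $z\ne 0$ this would force $z>0$ by the lemma, contradicting the zero coordinate, so $w=tv$. Thus the $r$-eigenspace is the line spanned by $v$, and its unique element with $v_{1}+\dots+v_{n}=1$ is $v/\sum_{i}v_{i}$, which is positive, as claimed.

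The delicate point, and the only place where more than routine bookkeeping is needed, is the second paragraph: because $f$ is discontinuous on the boundary of $\Delta$, the existence of a maximiser must be obtained through the $(I+U)^{n-1}$-smoothing, which pushes the relevant supremum into the open positive orthant where $f$ is continuous. Once that is in hand, everything else follows from strict positivity of $(I+U)^{n-1}$ and the defining inequality of $f$.
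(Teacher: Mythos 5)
Your proposal is correct, but it cannot be compared line-by-line with the paper, because the paper does not prove this statement at all: Theorem \ref{pthm2} is quoted in the Preliminaries as the Perron--Frobenius theorem, with Horn and Johnson (2013) cited as the source, and is then used as a black box (e.g.\ to argue that the matrix $A^h+A^v$ in (\ref{AhAv}) has a positive dominant eigenvalue with a positive eigenvector unique up to scaling). What you supply is a self-contained classical Collatz--Wielandt argument, and it is sound: the positivity of $(I+U)^{n-1}$, the smoothing step that pushes the supremum of $f(x)=\min_{x_i>0}(Ux)_i/x_i$ onto the compact set $K$ of strictly positive normalized vectors where $f$ is continuous, the upgrade of $Uv\ge rv$ to equality, the identification $r=\rho(U)>0$, and the one-dimensionality of the $r$-eigenspace via $z=tv-w$ all go through. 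Two small points you lean on implicitly: first, your lemma uses the equivalence between the paper's permutation-block definition of irreducibility and strong connectivity of the associated digraph; this is standard but deserves a sentence (if the digraph were not strongly connected, the set of states reachable from a fixed vertex would produce, after a permutation, exactly the forbidden block-triangular form). Second, under the paper's definition every $1\times 1$ matrix, including $U=(0)$, is irreducible, so the assertion $\rho(U)>0$ fails in that degenerate case; your ``the one-dimensional case is trivial'' should either exclude $U=(0)$ or adopt the usual convention that a $1\times1$ matrix is irreducible only if its entry is nonzero. What your route buys, relative to the paper, is a proof from first principles using only the stated definition and elementary compactness, at the cost of length; the paper's choice to cite the result is entirely adequate for its purposes.
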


 The following result regarding the reversible Markov chain will be used (see Norris (1998)):
 \begin{theorem}\label{pthm3}
 	Let $I$ be a countable set, and let $\{X_n\}_{n\ge0}$ be a Markov chain with state space $I$, transition probability matrix $P=(p_{ij})$, $i,\,j\in I$ and initial distribution $\Pi=(\pi_i)_{i\in I}$. Then, the following are equivalent:\\
 	\noindent $(i)$ $\{X_n\}_{n\ge0}$ is reversible;\\
 	\noindent $(ii)$ $P$ and $\Pi$ are in detailed balance, that is, $\pi_ip_{ij}=\pi_jp_{ji}$ for all $i,j$.
 \end{theorem}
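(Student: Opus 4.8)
The plan is to translate reversibility into a statement about finite-dimensional distributions and then exploit the product (telescoping) form of the law of a Markov chain. Throughout I take reversibility of $\{X_n\}_{n\ge 0}$ with initial law $\Pi$ to mean that for every $N\ge 1$ the vector $(X_0,X_1,\dots,X_N)$ has the same distribution as its time reversal $(X_N,X_{N-1},\dots,X_0)$, and I use that under $\mathrm{Markov}(\Pi,P)$,
\[
\mathrm{Pr}\{X_0=i_0,X_1=i_1,\dots,X_N=i_N\}=\pi_{i_0}\,p_{i_0i_1}\,p_{i_1i_2}\cdots p_{i_{N-1}i_N}.
\]

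For the implication $(i)\Rightarrow(ii)$ I would simply specialise to $N=1$: the equality in distribution of $(X_0,X_1)$ and $(X_1,X_0)$ reads $\pi_i p_{ij}=\mathrm{Pr}\{X_0=i,X_1=j\}=\mathrm{Pr}\{X_0=j,X_1=i\}=\pi_j p_{ji}$ for all $i,j\in I$, which is precisely the detailed balance condition.

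For $(ii)\Rightarrow(i)$ I would first note that detailed balance makes $\Pi$ invariant: summing $\pi_i p_{ij}=\pi_j p_{ji}$ over $i\in I$ gives $\sum_{i}\pi_i p_{ij}=\pi_j\sum_{i} p_{ji}=\pi_j$, so $\Pi P=\Pi$ and hence $\Pi P^{m}=\Pi$ for all $m$. Next, fix $N\ge 1$ and a path $(i_0,\dots,i_N)$ with $\pi_{i_k}>0$ for every $k$. For each $k$, detailed balance gives $p_{i_ki_{k+1}}=(\pi_{i_{k+1}}/\pi_{i_k})\,p_{i_{k+1}i_k}$; multiplying these for $k=0,1,\dots,N-1$ and observing that the interior factors $\pi_{i_1},\dots,\pi_{i_{N-1}}$ cancel, one obtains
\[
\pi_{i_0}\,p_{i_0i_1}\cdots p_{i_{N-1}i_N}=\pi_{i_N}\,p_{i_Ni_{N-1}}\cdots p_{i_1i_0},
\]
that is, $\mathrm{Pr}\{X_0=i_0,\dots,X_N=i_N\}=\mathrm{Pr}\{X_0=i_N,\dots,X_N=i_0\}$. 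Since this holds for every $N$ and every such path, the law of $(X_0,\dots,X_N)$ coincides with that of its time reversal and the chain is reversible.

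The only delicate point, which I regard as the main (and rather mild) obstacle, is handling an index $k$ with $\pi_{i_k}=0$, where the division above is illegitimate. Here I would argue that invariance gives $\mathrm{Pr}\{X_m=i\}=(\Pi P^{m})_i=\pi_i=0$ for every $m\ge 0$ whenever $\pi_i=0$; consequently any path visiting a $\Pi$-null state has probability zero, and so does its reversal, since it visits exactly the same states. Thus the displayed identity holds trivially in that case too, and one may equally well just run the whole argument on the chain restricted to $\{i\in I:\pi_i>0\}$. Apart from this bookkeeping the proof is the elementary telescoping computation together with the $N=1$ specialisation, so I do not anticipate further difficulty.
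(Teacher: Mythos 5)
The paper does not prove this statement at all: it is quoted verbatim as a known preliminary from Norris (1998), so there is no internal proof to compare against. Your argument is the standard textbook proof of that result and is correct: the specialisation to $N=1$ gives $(i)\Rightarrow(ii)$, detailed balance yields invariance of $\Pi$, and the telescoping identity
\begin{equation*}
\pi_{i_0}\,p_{i_0i_1}\cdots p_{i_{N-1}i_N}=\pi_{i_N}\,p_{i_Ni_{N-1}}\cdots p_{i_1i_0}
\end{equation*}
gives equality of the path law with its reversal, which is (an equivalent formulation of) reversibility. One small simplification you may want to adopt: the division by $\pi_{i_k}$, and hence all of your null-state bookkeeping, is unnecessary. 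Apply detailed balance iteratively to transport the stationary weight along the path,
\begin{equation*}
\pi_{i_0}p_{i_0i_1}p_{i_1i_2}\cdots p_{i_{N-1}i_N}
= p_{i_1i_0}\,\pi_{i_1}p_{i_1i_2}\cdots p_{i_{N-1}i_N}
= \cdots
= p_{i_1i_0}p_{i_2i_1}\cdots p_{i_Ni_{N-1}}\,\pi_{i_N},
\end{equation*}
which holds with no positivity assumption on the $\pi_{i_k}$; your handling of the null states via invariance is also valid, just longer than needed.
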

 
 Next, we collect some definitions from graph theory (Godsil and Royle (2001)). 
 \begin{definition}(\textbf{Adjacency matrix})
 	Let $\mathcal{G}$ be a graph with vertex set $V=\{v_1,v_2,\dots,v_m\}$. Then, the adjacency matrix of $\mathcal{G}$ is a $m\times m$ matrix whose $(i,j)$ entry is $1$ if there is an edge between $v_i$ and $v_j$ otherwise it is $0$. 
 \end{definition} 
 \begin{definition}{(\textbf{Incidence matrix})}
 	The incidence matrix of a graph $\mathcal{G}$ is a binary matrix with rows and columns index by its vertices and edges, respectively, and its $(i,e)$ entry is $1$ if and only if the vertex $i$ is in the edge $e$.
 \end{definition}
  The Laplacian of a graph $\mathcal{G}$ is defined as $L(\mathcal{G})=KK^T$ where $K$ is the incidence matrix of $\mathcal{G}$.
  
  The following result regarding the rank of Laplacian of a graph will be used (Godsil and Royle (2001)):
 \begin{lemma}\label{plemm1}
 	Let $\mathcal{G}$ be a graph with $n$ vertices and $a$ many connected components. Then, the rank of $L(\mathcal{G})$ is $n-a$.
 \end{lemma}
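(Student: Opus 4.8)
The plan is to pass from $L(\mathcal{G})=KK^T$ to the incidence matrix $K$, and then to read off the relevant dimension from the combinatorics of $\mathcal{G}$. Write $V=\{v_1,\dots,v_n\}$ for the vertex set of $\mathcal{G}$, let $K$ be its incidence matrix, and regard a vector $x\in\mathbb{R}^n$ as a function on $V$. First I would note that $L(\mathcal{G})=KK^T$ is symmetric and positive semidefinite, since $x^TL(\mathcal{G})x=x^TKK^Tx=\|K^Tx\|^2\ge 0$ for every $x$. Consequently $L(\mathcal{G})x=0$ holds if and only if $x^TL(\mathcal{G})x=0$, i.e. if and only if $K^Tx=0$; thus $\ker L(\mathcal{G})=\ker K^T$. (Equivalently, Remark \ref{prem1} gives $\mathrm{rank}(L(\mathcal{G}))=\mathrm{rank}(K)$, so $\dim\ker L(\mathcal{G})=n-\mathrm{rank}(K)=\dim\ker K^T$.) Hence it suffices to show that $\dim\ker K^T=a$.

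Next I would compute $\ker K^T$ directly. Orienting each edge (the convention under which $KK^T$ is the graph Laplacian), every column of $K$ carries one entry $+1$ and one entry $-1$, so the coordinate of $K^Tx$ indexed by an edge $e=\{v_i,v_j\}$ equals $\pm(x_i-x_j)$. Therefore $K^Tx=0$ precisely when $x_i=x_j$ for every edge $\{v_i,v_j\}$ of $\mathcal{G}$. Since a path in $\mathcal{G}$ propagates equality of the coordinates along its edges, $x$ solves this system if and only if $x$ is constant on each connected component of $\mathcal{G}$. If $\mathcal{G}$ has connected components $\mathcal{G}_1,\dots,\mathcal{G}_a$, then the indicator vectors $\mathbf{1}_{V(\mathcal{G}_1)},\dots,\mathbf{1}_{V(\mathcal{G}_a)}$ form a basis of this solution space, so $\dim\ker K^T=a$ and hence $\mathrm{rank}(L(\mathcal{G}))=n-a$.

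An alternative organization, if one prefers to keep everything at the level of the cited remarks, is to relabel the vertices and edges component by component so that $L(\mathcal{G})$ becomes block diagonal with diagonal blocks $L(\mathcal{G}_1),\dots,L(\mathcal{G}_a)$ (there are no off-diagonal blocks because no edge joins two components); by Remark \ref{prem2} it then suffices to establish $\mathrm{rank}(L(\mathcal{H}))=m-1$ for a connected graph $\mathcal{H}$ on $m$ vertices, which is exactly the $a=1$ instance of the kernel computation above. I do not expect a genuine obstacle in either route: the only points that need a moment of care are fixing the orientation convention so that the entries of $K^Tx$ take the stated form, and invoking the elementary fact that a function on $V$ which agrees across every edge is the same thing as a function that is constant on each connected component.
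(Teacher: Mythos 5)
Your proof is correct, but note that the paper offers no proof of this lemma at all: it is quoted as a known fact from Godsil and Royle (2001), so there is no in-paper argument to compare against. What you give is the standard textbook proof, and both of your organizations work: the kernel computation ($L(\mathcal{G})=KK^T$ is positive semidefinite, so $\ker L(\mathcal{G})=\ker K^T$, and $K^Tx=0$ forces $x$ to be constant on each connected component, whence the nullity is $a$), and the reduction via Remark \ref{prem2} to the connected case after a component-by-component relabelling. One point in your write-up deserves emphasis rather than a parenthetical: the paper defines the incidence matrix as a binary $(0,1)$ matrix, and with that literal definition $KK^T$ is the \emph{signless} Laplacian $\mathscr{D}+\mathscr{A}$, whose nullity is the number of bipartite components; the stated lemma would then fail, e.g.\ for an odd cycle, where the signless Laplacian is nonsingular while $n-a=n-1$. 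Your insistence on the oriented incidence matrix (each column with one entry $+1$ and one entry $-1$), under which $KK^T$ is the usual Laplacian $\mathscr{D}-\mathscr{A}$, is therefore not a cosmetic convention but exactly what makes the statement true, and it is also the version the paper actually uses later, since in the proof of Lemma \ref{lemma1} the matrix $\mathscr{U}\mathscr{U}^T$ is computed to be $2(\mathscr{D}-\mathscr{A}(G^{(q)}_{n_1,n_2,\dots,n_q}))$.
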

\section{GBDP on 2-dimensional grid}\label{sec2}
 Let us consider a generalized discrete-time Markov chain $X=\{X_k\}_{k\ge0}$ whose state space is a two dimensional grid $G^{(2)}_{m-1,n-1}=\{0,1,\dots,m-1\}\times\{0,1,\dots,n-1\}$, $m\ge2$, $n\ge2$. Let $l_1\ge1$ and $l_2\ge1$ be such that $\mathrm{max}\{l_1,l_2\}\leq\mathrm{min}\{m-1,n-1\}$. It is assumed that at any instance the chain can make jumps of sizes $1,2,\dots,l_1$ either rightward or upward. Further, it can make jumps of sizes $1,2,\dots,l_2$ either leftward or downward. Also, we assume that the probability of simultaneous transitions in vertical and horizontal directions is zero. Moreover, the probability of any other kind of jumps is zero. That is, if we create a graph structure of the state space $G^{(2)}_{m-1,n-1}$ then two vertices $(i,j)$ and $(i',j')$ of $G^{(2)}_{m-1,n-1}$ are connected via an edge if and only if either $|i-i'|\leq \max\{l_1,l_2\}$ with $j-j'=0$ or $|j-j'|\leq \max\{l_1,l_2\}$ with $i-i'=0$.
For $x\in\{1,2,\dots,l_1\}$, $y\in\{1,2,\dots,l_2\}$ and $(i,j)\in G^{(2)}_{m-1,n-1}$, we denote the transition probabilities of $X$ as follows: 
\begin{align*}
	r_{i,j}(x)&=\mathbb{P}\{X_{k+1}=(i+x,j)|X_k=(i,j)\},\\
		l_{i,j}(y)&=\mathbb{P}\{X_{k+1}=(i-y,j)|X_k=(i,j)\},\\
			u_{i,j}(x)&=\mathbb{P}\{X_{k+1}=(i,j+x)|X_k=(i,j)\}
\end{align*}
and
\begin{equation*}
	d_{i,j}(y)=\mathbb{P}\{X_{k+1}=(i,j-y)|X_k=(i,j)\},
\end{equation*}
	where $r_{i,j}(x)=0$, $l_{i,j}(y)=0$, $u_{i,j}(x)=0$ and $d_{i,j}(y)=0$ whenever $i+x>m-1$, $i-y<0$, $j+x>n-1$ and $j-y<0$,  respectively. These transition probabilities are such that $\sum_{x=1}^{l_1}(r_{i,j}(x)+u_{i,j}(x))+\sum_{y=1}^{l_2}(l_{i,j}(y)+d_{i,j}(y))\leq1$ for all $(i,j)\in G^{(2)}_{m-1,n-1}$. It's a possibility that this inequality is strict. Usually the strict inequality is justified by an absorbing state $(i_s,j_s)$ such that $$	\mathbb{P}\{X_{k+1}=(i_s,j_s)|X_k=(i,j)\}=1-\sum_{x=1}^{l_1}(r_{i,j}(x)+u_{i,j}(x))-\sum_{y=1}^{l_2}(l_{i,j}(y)+d_{i,j}(y)).$$
	
	Let $P^h$ and $P^v$  be the horizontal and vertical transition probability matrix of $X$, respectively. Then, its transition probability matrix $P$ can be obtained as $P=P^h+P^v$ whose dimension is $mn\times mn$. Further, if $P^h$ and $P^v$ commute then for $n\ge1$, the $n$-step transition probability matrix of $X$ is $
		P^{(n)}=(P^h+P^v)^n$ which can be procured using the binomial theorem.
	Suppose that the self transitions are allowed for every states. Then, the self transition probability matrix $D$ is a diagonal matrix. In this case, the transition probability matrix of $X$ is given by $P=P^h+P^v+D$. Moreover, if $D$ is a scalar matrix, that is, the self transition probabilities are same for all $(i,j)\in G^{(2)}_{m-1,n-1}$ then the $k$-step transition probability matrix of $X$ is given by
	\begin{equation*}
		P^{(k)}=({P^h}+{P^v}+D)^k=\sum_{a+b+c=k}\frac{k!}{a!b!c!}(P^h)^a(P^v)^bD^c.
	\end{equation*}
	
	The following result provides a necessary and sufficient condition for which the matrices $P^h$ and $P^v$ commute.
	\begin{theorem}\label{thm1}
		 Let $s=(s_1,s_2)\in G^{(2)}_{m-1,n-1}$, $t=(t_1,t_2)\in G^{(2)}_{m-1,n-1}$ and $(i,j)\in \{0,1,\dots,m-1\}\times\{0,1,\dots,n-1\}$. Then, the transition probability matrices $P^h=(p^h_{s,t})$ and $P^v=(p^v_{s,t})$ commute with each other if and only if the following conditions hold:
		\begin{equation*}
			\begin{aligned}
				u_{i,j}(x)r_{i,j+x}(x')&=r_{i,j}(x')u_{i+x',j}(x),\\
				d_{i,j+y}(y)r_{i,j}(x)&=r_{i,j+y}(x)d_{i+x,j+y}(y),\\
				d_{i+y,j+y'}(y')l_{i+y,j}(y)&=l_{i+y,j+y'}(y)d_{i,j+y'}(y')
			\end{aligned}
		\end{equation*}
		and 
		\begin{equation*}
			u_{i+y,j}(x)l_{i+y,j+x}(y)=l_{i+y,j}(y)u_{i,j}(x),
		\end{equation*}
		where $x\in\{1,2,\dots,l_1\}$, $x'\in\{1,2,\dots,l_1\}$, $y\in\{1,2,\dots,l_2\}$ and $y'\in\{1,2,\dots,l_2\}$.
	\end{theorem}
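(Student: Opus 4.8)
The plan is to compute the entries of $P^hP^v$ and $P^vP^h$ directly and to exploit the fact that each such entry is a \emph{single} product, not a genuine sum. Since a horizontal transition changes only the first coordinate while a vertical transition changes only the second, $p^h_{s,w}$ can be nonzero only when $w_2=s_2$, and $p^v_{w,t}$ can be nonzero only when $w_1=t_1$. Hence in $(P^hP^v)_{s,t}=\sum_{w}p^h_{s,w}p^v_{w,t}$ the summation index is forced to equal $w=(t_1,s_2)$, so that
\begin{equation*}
	(P^hP^v)_{s,t}=p^h_{(s_1,s_2),(t_1,s_2)}\,p^v_{(t_1,s_2),(t_1,t_2)},\qquad (P^vP^h)_{s,t}=p^v_{(s_1,s_2),(s_1,t_2)}\,p^h_{(s_1,t_2),(t_1,t_2)},
\end{equation*}
the second identity being obtained symmetrically. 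The four points $(s_1,s_2)$, $(t_1,s_2)$, $(s_1,t_2)$, $(t_1,t_2)$ all lie in $G^{(2)}_{m-1,n-1}$, so these expressions are well defined for every $s,t$ in the grid.

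Therefore $P^hP^v=P^vP^h$ if and only if
\begin{equation*}
	p^h_{(s_1,s_2),(t_1,s_2)}\,p^v_{(t_1,s_2),(t_1,t_2)}=p^v_{(s_1,s_2),(s_1,t_2)}\,p^h_{(s_1,t_2),(t_1,t_2)}
\end{equation*}
for all $s=(s_1,s_2)$, $t=(t_1,t_2)$ in the grid. If $s_1=t_1$ or $s_2=t_2$, both sides contain a self-transition probability and hence vanish, since self transitions are not allowed; so it suffices to treat $s_1\neq t_1$, $s_2\neq t_2$. I would then split according to the signs of $t_1-s_1$ and $t_2-s_2$. In each of the four cases I would rewrite the four one-step probabilities above via the definitions of $r_{i,j},l_{i,j},u_{i,j},d_{i,j}$, writing the sizes of the rightward/upward legs as $x,x'\in\{1,\dots,l_1\}$ and those of the leftward/downward legs as $y,y'\in\{1,\dots,l_2\}$, and then perform a change of variables relocating the base point so that the resulting identity is stated over the full range $(i,j)\in\{0,\dots,m-1\}\times\{0,\dots,n-1\}$. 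For instance, the sign pattern $t_1-s_1=x'>0$, $t_2-s_2=x>0$ gives, with $(i,j)=(s_1,s_2)$, the identity $r_{i,j}(x')u_{i+x',j}(x)=u_{i,j}(x)r_{i,j+x}(x')$, which is the first displayed condition; the patterns $(+,-)$, $(-,-)$ and $(-,+)$ yield the second, third and fourth conditions, respectively.

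Finally I would reconcile the boundary conventions. The base point $(i,j)$ of a displayed condition always lies in the grid, but the ``far'' corner of the square it describes --- namely $(t_1,s_2)$ or $(s_1,t_2)$ --- may leave the grid after the change of variables (e.g.\ $i+x'>m-1$ or $j+x>n-1$), and a step size may exceed the allowed bound. In every such degenerate instance at least one factor on each side of the relevant equation vanishes by the stated conventions ($r_{i,j}(x)=0$ when $i+x>m-1$, and likewise for $l_{i,j},u_{i,j},d_{i,j}$), so both the $(s,t)$-equation and the corresponding instance of the displayed condition reduce to $0=0$; hence restricting to in-grid targets loses nothing, and the four displayed conditions, taken over all admissible $i,j,x,x',y,y'$, are exactly equivalent to $P^hP^v=P^vP^h$. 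The algebra in each of the four cases is routine; the only step demanding genuine care is this last one --- verifying that the change of variables in each sign pattern reproduces precisely the index ranges appearing in the statement and that every out-of-range instance is vacuous, so that the equivalence holds on the nose rather than merely up to boundary terms.
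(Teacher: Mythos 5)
Your argument is essentially the paper's own proof: the paper likewise reduces commutation to the entrywise identity $\sum_{t'}p^h_{s,t'}p^v_{t',t}=\sum_{t'}p^v_{s,t'}p^h_{t',t}$ and substitutes the definitions \eqref{tph}--\eqref{tpv}, while you simply make explicit what the paper leaves implicit --- that each sum collapses to the single intermediate state $(t_1,s_2)$ or $(s_1,t_2)$, followed by the four sign-pattern cases and the boundary (zero-probability) check. The proposal is correct and matches the paper's route, just carried out in more detail.
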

	\begin{proof}
		Let us consider the following order on the state space $G^{(2)}_{m-1,n-1}$: $I=\{(0,0),(0,1),\dots,(0,n-1),(1,0), (1,1),\dots,(1,n-1),\dots,(m-1,0),(m-1,1),\dots,(m-1,n-1)\}$. Also, let the $s$th row and $t$th column of matrices $P^h$ and $P^v$ are index by the $s$th and $t$th element of $I$, respectively.
		Then, for $x\in\{1,2,\dots.l_1\}$ and $y\in\{1,2,\dots,l_2\}$, the horizontal transition probabilities of $X$ are given by
		\begin{equation}\label{tph}
			p^h_{s,t}=\begin{cases}
				r_{s_1,s_2}(x),\ t-s=(x,0),\vspace{0.1cm}\\
				l_{s_1,s_2}(y),\ t-s=(-y,0),\vspace{0.1cm}\\
				0,\ \mathrm{otherwise},
			\end{cases}
		\end{equation}
		and its vertical transition probabilities are given by
		\begin{equation}\label{tpv}
			p^v_{s,t}=\begin{cases}
				u_{s_1,s_2}(x),\ t-s=(0,x),\vspace{0.1cm}\\
				d_{s_1,s_2}(y),\ t-s=(0,-y),\vspace{0.1cm}\\
				0,\ \mathrm{otherwise},
			\end{cases}
		\end{equation}
		where $t-s=(t_1-s_1,t_2-s_2)$. So, the commuting condition $P^hP^v=P^vP^h$ implies that 
		\begin{equation}\label{Commute}
			\sum_{t'\in I}p^h_{s,t'}p^v_{t',t}=\sum_{t'\in I}p^v_{s,t'}p^h_{t',t}
		\end{equation}
		for all pairs $(s,t)\in I\times I$. Thus, by using (\ref{tph}) and (\ref{tpv}) in (\ref{Commute}), we get the required result.
	\end{proof}
	
	\begin{remark}
		For $l_1=l_2=1$, the graph of $G^{(2)}_{m-1,n-1}$ is a $m\times n$ grid. In this case, the process $X$ reduces to the commuting birth-death process introduced and studied by Evans \textit{et al.} (2010). So, $P^h$ and $P^v$ commute if and only if the following conditions hold:
		\begin{equation*}
			\begin{aligned}
				u_{i,j}(1)r_{i,j+1}(1)&=r_{i,j}(1)u_{i+1,j}(1),\\  d_{i,j+1}(1)r_{i,j}(1)&=r_{i,j+1}(1)d_{i+1,j+1}(1),\\
				d_{i+1,j+1}(1)l_{i+1,j}(1)&=l_{i+1,j+1}(1)d_{i,j+1}(1)
			\end{aligned}
		\end{equation*}
		and
		\begin{equation*}
			u_{i+1,j}(1)l_{i+1,j+1}(1)=l_{i+1,j}(1)u_{i,j}(1).
		\end{equation*}
		Thus, the probability of moving from one corner of $G^{(2)}_{m-1,n-1}$ to its diagonally opposite corner in two steps is same for every path.
	\end{remark}

	\begin{example}\label{exp1}		
		For $m=n=3$, the graph of $G^{(2)}_{2,2}$ is given in Figure \ref{fig1}. 		
		\begin{figure}[ht!]
			\includegraphics[width=6cm]{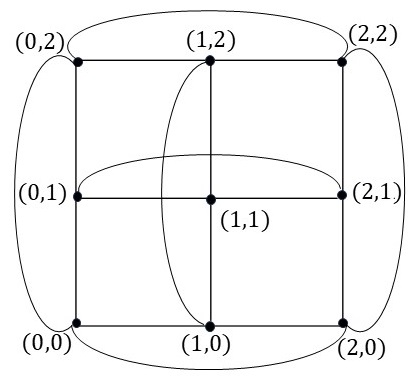}
			\caption{ $2\times2$ grid with nine vertices}\label{fig1}
		\end{figure}
		
		For $l_1=l_2=2$, the $9\times 9$ horizontal and vertical transition probability  matrices are given as follows:
		\begin{equation*}\label{ph}
			P^h=\ \begin{array}{c|ccccccccc}
				& (0,0) & (0,1) & (0,2)&(1,0)&(1,1)&(1,2)&(2,0)&(2,1)&(2,2) \\ \hline
				(0,0) & 0 & 0 & 0&r_{0,0}(1)&0&0&r_{0,0}(2)&0&0 \\
				(0,1) & 0 & 0 & 0&0&r_{0,1}(1)&0 &0&r_{0,1}(2)&0 \\
				(0,2) & 0 & 0 & 0&0&0&r_{0,2}(1)&0&0&r_{0,2}(2)\\
				(1,0)&l_{1,0}(1)&0&0&0&0&0&r_{1,0}(1)&0&0\\
				(1,1)&0&l_{1,1}(1)&0&0&0&0&0&r_{1,1}(1)&0\\
				(1,2)&0&0&l_{1,2}(1)&0&0&0&0&0&r_{1,2}(1)\\
				(2,0)&l_{2,0}(2)&0&0&l_{2,0}(1)&0&0&0&0&0\\
				(2,1)&0&l_{2,1}(2)&0&0&l_{2,1}(1)&0&0&0&0\\
				(2,2)&0&0&l_{2,2}(2)&0&0&l_{2,2}(1)&0&0&0
			\end{array}
		\end{equation*}
		and
	\begin{equation*}\label{pv}
		P^v=\ \begin{array}{c|ccccccccc}
			& (0,0) & (0,1) & (0,2)&(1,0)&(1,1)&(1,2)&(2,0)&(2,1)&(2,2) \\ \hline
			(0,0) & 0 & u_{0,0}(1) & u_{0,0}(2)&0&0&0&0&0&0 \\
			(0,1) & d_{0,1}(1) & 0 & u_{0,1}(1)&0&0&0 &0&0&0 \\
			(0,2) & d_{0,2}(2) & d_{0,2}(1) & 0&0&0&0&0&0&0\\
			(1,0)&0&0&0&0&u_{1,0}(1)&u_{1,0}(2)&0&0&0\\
			(1,1)&0&0&0&d_{1,1}(1)&0&u_{1,1}(1)&0&0&0\\
			(1,2)&0&0&0&d_{1,2}(2)&d_{1,2}(1)&0&0&0&0\\
			(2,0)&0&0&0&0&0&0&0&u_{2,0}(1)&u_{2,0}(2)\\
			(2,1)&0&0&0&0&0&0&d_{2,1}(1)&0&u_{2,1}(1)\\
			(2,2)&0&0&0&0&0&0&d_{2,2}(2)&d_{2,2}(1)&0
		\end{array}
	\end{equation*}		
		  If  all the transition probabilities $u_{i,j}(x)$, $r_{i,j}(x)$, $d_{i,j}(y)$ and $l_{i,j}(y)$ in $P^h$ and $P^v$ are positive then the commuting conditions given in Theorem \ref{thm1} for $m=n=3$ and $l_1=l_2=2$ is equivalent to the following constraints:		 
			\begin{align}
				\mathrm{rank}&\begin{pmatrix}
					r_{0,0}(1)& d_{1,1}(1)&l_{1,1}(1)&u_{0,0}(1)\vspace{0.1cm}\\
					r_{0,1}(1)&d_{0,1}(1)&l_{1,0}(1)&u_{1,0}(1)
				\end{pmatrix}=1,\nonumber\\
				\mathrm{rank}&\begin{pmatrix}
					r_{1,0}(1)&d_{2,1}(1)&l_{2,1}(1)&u_{1,0}(1)\vspace{0.1cm}\\
					r_{1,1}(1)&d_{1,1}(1)&l_{2,0}(1)&u_{2,0}(1)
				\end{pmatrix}=1,\nonumber\\
				\mathrm{rank}&\begin{pmatrix}
					r_{0,1}(1)&d_{1,2}(1)&l_{1,2}(1)&u_{0,1}(1)\vspace{0.1cm}\\
					r_{0,2}(1)&d_{0,2}(1)&l_{1,1}(1)&u_{1,1}(1)			\end{pmatrix}=1,\nonumber\\
				\mathrm{rank}&\begin{pmatrix}
					r_{1,1}(1)&d_{2,2}(1)&l_{2,2}(1)&u_{1,1}(1)\vspace{0.1cm}\\
					r_{1,2}(1)&d_{1,2}(1)&l_{2,1}(1)&u_{2,1}(1)
				\end{pmatrix}=1,\nonumber\\
				\mathrm{rank}&\begin{pmatrix}
					r_{0,0}(1)&d_{1,2}(2)&l_{1,2}(1)&u_{0,0}(2)\vspace{0.1cm}\\
					r_{0,2}(1)&d_{0,2}(2)&l_{1,0}(1)&u_{1,0}(2)
				\end{pmatrix}=1,\label{matrices}\\
				\mathrm{rank}&\begin{pmatrix}
					r_{1,0}(1)&d_{2,2}(2)&l_{2,2}(1)&u_{1,0}(2)\vspace{0.1cm}\\
					r_{1,2}(1)&d_{1,2}(2)&l_{2,0}(1)&u_{2,0}(2)
				\end{pmatrix}=1,\nonumber\\
				\mathrm{rank}&\begin{pmatrix}
					r_{0,1}(2)&d_{0,1}(1)&l_{2,0}(2)&u_{2,0}(1)\vspace{0.1cm}\\
					r_{0,0}(2)&d_{2,1}(1)&l_{2,1}(2)&u_{0,0}(1)
				\end{pmatrix}=1,\nonumber\\
				\mathrm{rank}&\begin{pmatrix}
					r_{0,2}(2)&d_{0,2}(1)&l_{2,1}(2)&u_{2,1}(1)\vspace{0.1cm}\\
					r_{0,1}(2)&d_{2,2}(1)&l_{2,2}(2)&u_{0,1}(1)
				\end{pmatrix}=1\nonumber
			\end{align}
			and
			\begin{equation*}
				\mathrm{rank}\begin{pmatrix}
					r_{0,0}(2)&d_{2,2}(2)&l_{2,2}(2)&u_{0,0}(2)\vspace{0.1cm}\\
					r_{0,2}(2)&d_{0,2}(2)&l_{2,0}(2)&u_{2,0}(2)
				\end{pmatrix}=1.
			\end{equation*}
		That is, for $m=n=3$ and $l_1=l_2=2$, the thirty six equations corresponding to the conditions in Theorem \ref{thm1} are among minors of order $2$ of the nine matrices given in (\ref{matrices}).
		
		 Let $\{h_1, h_2, h_3, v_1, v_2, v_3\}$ and $\{b_{i,j},\ (i,j)\in G^{(2)}_{2,2}\}$ be two collections of positive constants. Then, the commuting variety of transition probability matrices $P^h$ and $P^v$, that is, $P^hP^v=P^vP^h$  has the following parametrization:
		\begin{equation}\label{matricesrank}
			\begin{aligned}
				&r_{0,0}(1)=h_1\frac{b_{0,0}}{b_{1,0}},\  r_{0,0}(2)=h_3\frac{b_{0,0}}{b_{2,0}},\  r_{0,1}(1)=h_1\frac{b_{0,1}}{b_{1,1}},\  r_{0,1}(2)=h_3\frac{b_{0,1}}{b_{2,1}},\\ &r_{0,2}(1)=h_1\frac{b_{0,2}}{b_{1,2}},\ r_{0,2}(2)=h_3\frac{b_{0,2}}{b_{2,2}},\ r_{1,0}(1)=h_2\frac{b_{1,0}}{b_{2,0}},\  r_{1,1}(1)=h_2\frac{b_{1,1}}{b_{2,1}},\\
				&r_{1,2}(1)=h_2\frac{b_{1,2}}{b_{2,2}},\ l_{1,0}(1)=h_1\frac{b_{1,0}}{b_{0,0}},\
				l_{1,1}(1)=h_1\frac{b_{1,1}}{b_{0,1}},\  l_{1,2}(1)=h_1\frac{b_{1,2}}{b_{0,2}},\\
				&l_{2,0}(1)=h_2\frac{b_{2,0}}{b_{1,0}},\  l_{2,1}(1)=h_2\frac{b_{2,1}}{b_{1,1}},\  l_{2,2}(1)=h_2\frac{b_{2,2}}{b_{1,2}},\  l_{2,0}(2)=h_3\frac{b_{2,0}}{b_{0,0}},\\
				&l_{2,1}(2)=h_3\frac{b_{2,1}}{b_{0,1}},\ l_{2,2}(2)=h_3\frac{b_{2,2}}{b_{0,2}},\ u_{0,0}(1)=v_1\frac{b_{0,0}}{b_{0,1}},\ u_{0,0}(2)=v_3\frac{b_{0,0}}{b_{0,2}},\\
				& u_{0,1}(1)=v_2\frac{b_{0,1}}{b_{0,2}},\  u_{1,0}(1)=v_1\frac{b_{1,0}}{b_{1,1}},\ u_{1,0}(2)=v_3\frac{b_{1,0}}{b_{1,2}},\ u_{1,1}(1)=v_2\frac{b_{1,1}}{b_{1,2}},\\
				&u_{2,0}(1)=v_1\frac{b_{2,0}}{b_{2,1}},\ u_{2,0}(2)=v_3\frac{b_{2,0}}{b_{2,2}},\  u_{2,1}(1)=v_2\frac{b_{2,1}}{b_{2,2}},\  d_{0,1}(1)=v_1\frac{b_{0,1}}{b_{0,0}},\\ 
				&d_{0,2}(1)=v_2\frac{b_{0,2}}{b_{0,1}},\ d_{1,1}(1)=v_1\frac{b_{1,1}}{b_{1,0}},\ d_{1,2}(1)=v_2\frac{b_{1,2}}{b_{1,1}},\  d_{2,1}(1)=v_1\frac{b_{2,1}}{b_{2,0}},\\
				&d_{2,2}(1)=v_2\frac{b_{2,2}}{b_{2,1}},\ d_{0,2}(2)=v_3\frac{b_{0,2}}{b_{0,0}},\ d_{1,2}(2)=v_3\frac{b_{1,2}}{b_{1,0}},\ d_{2,2}(2)=v_3\frac{b_{2,2}}{b_{2,0}}.
			\end{aligned}
		\end{equation}
		It is important to note that if some transition probabilities are zero then the corresponding conditions given in Theorem \ref{thm1} will satisfy even if the condition (\ref{matricesrank}) does not hold. The parametrization given in (\ref{matricesrank}) can be rewritten in the matrix form as follows:
		\begin{equation}\label{matrixrep}
			P^h=BA^hB^{-1}\ \ \mathrm{and}\ \ P^v=BA^vB^{-1},
		\end{equation}
		where $A^h$, $A^v$ and $B$ are $9\times 9$ matrices given by	
			\begin{equation*}
				A^h=\ \begin{array}{c|ccccccccc}
					& (0,0) & (0,1) & (0,2)&(1,0)&(1,1)&(1,2)&(2,0)&(2,1)&(2,2) \\ \hline
					(0,0) & 0 & 0 & 0&h_1&0&0&h_3&0&0 \\
					(0,1) & 0 & 0 & 0&0&h_1&0 &0&h_3&0 \\
					(0,2) & 0 & 0 & 0&0&0&h_1&0&0&h_3\\
					(1,0)&h_1&0&0&0&0&0&h_2&0&0\\
					(1,1)&0&h_1&0&0&0&0&0&h_2&0\\
					(1,2)&0&0&h_1&0&0&0&0&0&h_2\\
					(2,0)&h_3&0&0&h_2&0&0&0&0&0\\
					(2,1)&0&h_3&0&0&h_2&0&0&0&0\\
					(2,2)&0&0&h_3&0&0&h_2&0&0&0
				\end{array},
		\end{equation*}
			\begin{equation*}
				A^v=\ \begin{array}{c|ccccccccc}
					& (0,0) & (0,1) & (0,2)&(1,0)&(1,1)&(1,2)&(2,0)&(2,1)&(2,2) \\ \hline
					(0,0) & 0 & v_1 & v_3&0&0&0&0&0&0 \\
					(0,1) & v_1 & 0 & v_2&0&0&0 &0&0&0 \\
					(0,2) & v_3 & v_2 & 0&0&0&0&0&0&0\\
					(1,0)&0&0&0&0&v_1&v_3&0&0&0\\
					(1,1)&0&0&0&v_1&0&v_2&0&0&0\\
					(1,2)&0&0&0&v_3&v_2&0&0&0&0\\
					(2,0)&0&0&0&0&0&0&0&v_1&v_3\\
					(2,1)&0&0&0&0&0&0&v_1&0&v_2\\
					(2,2)&0&0&0&0&0&0&v_3&v_2&0
				\end{array},
		\end{equation*}
		and
		\begin{equation*}
			B=\ \begin{array}{c|ccccccccc}
				& (0,0) & (0,1) & (0,2)&(1,0)&(1,1)&(1,2)&(2,0)&(2,1)&(2,2) \\ \hline
				(0,0) & b_{0,0} & 0 & 0&0&0&0&0&0&0 \\
				(0,1) & 0 & b_{0,1} & 0&0&0&0 &0&0&0 \\
				(0,2) & 0 & 0 &b_{0,2}&0&0&0&0&0&0\\
				(1,0)&0&0&0&b_{1,0}&0&0&0&0&0\\
				(1,1)&0&0&0&0&b_{1,1}&0&0&0&0\\
				(1,2)&0&0&0&0&0&b_{1,2}&0&0&0\\
				(2,0)&0&0&0&0&0&0&b_{2,0}&0&0\\
				(2,1)&0&0&0&0&0&0&0&b_{2,1}&0\\
				(2,2)&0&0&0&0&0&0&0&0&b_{2,2}
			\end{array},
		\end{equation*}
		respectively.
		
		Note that $B$ is a $9\times9$ diagonal matrix, $A^v$ is a block diagonal matrix with $3$ identical $3\times3$ symmetric and tri-diagonal block matrices $U^v$ and $A^h$ is a block diagonal matrix with $3$ identical $3\times3$ symmetric and tri-diagonal block matrices $U^h$. For matrix $A^h$, this point is evident by reordering the rows and columns of $A^h$ as $(0,0),\,(1,0),\,(2,0),\,(0,1),\,(1,1),\,(2,1),\,(0,2),\,(1,2),\,(2,2)$. Then, the matrices $U^h$ and $U^v$ satisfy the following conditions:
		\begin{equation*}\label{reo1}
			{A^h}_{(i,j),(i',j')}=\begin{cases}
				{U^h}_{(i,i')},\ j=j',\\
				0,\ \mathrm{otherwise}
			\end{cases}
		\end{equation*} 
		and
		\begin{equation*}\label{reo2}
			{A^v}_{(i,j),(i',j')}=\begin{cases}
				{U^v}_{(j,j')},\ i=i',\\
				0,\ \mathrm{otherwise},
			\end{cases}
		\end{equation*}
		where ${A^h}_{(i,j),(i',j')}$ denote the ${((i,j),(i',j'))}$ entry of the matrix $A^h$. Here, $U^h_{(i,i')}$  is an entry of block matrices $U^h$ corresponding to the row indexed by $(i,j)$ and the column indexed by $(i',j)$ of $A^h$, and  $U^v_{(j,j')}$ is an entry of $U^v$ corresponding to the row indexed by $(i,j)$ and  the column indexed by $(i,j')$ of $A^v$.
	\end{example}

A similar parametrization can be established for the transition probability matrices of GBDP with state space $G^{(2)}_{m-1,n-1}$. In this case, we will get a $m\times n$ diagonal matrix $B$, a block diagonal matrix $A^h$ with $n$ identical $m\times m$ symmetric and tri-diagonal block matrices $U^h$, and a second block diagonal matrix $A^v$ with $m$ identical $n\times n$ symmetric and tri-diagonal block matrices $U^v$.

For $i =0, 1, \dots, m-1$ and $j=0, 1, \dots, n-1$, let $\lambda^h_i$  and $\lambda^v_j$ be the eigenvalues of $U^h$ and $U^v$ whose associated orthonormalized eigenvectors are $w^h_i$ and $w^v_j$, respectively.  Then, from Theorem \ref{pthm1},  the entries of $k$th power of block matrices $U^h$ and $U^v$ corresponding to the entries $U^h_{(i,i')}$ and $U^v_{(j,j')}$ are given by
\begin{equation}\label{spectdec1}
	({U^h})^{k}_{(i,i')}=\sum_{r=0}^{m-1}(\lambda^h_r)^kw^h_r(i)w^h_r(i')
\end{equation}
and
\begin{equation}\label{spectdec2}
	({U^v})^{k}_{(j,j')}=\sum_{r=0}^{n-1}(\lambda_r^v)^kw_r^v(j)w_r^v(j'),
\end{equation}
respectively. Here, $w_r^h(i)$, $i=1,2,\dots,m$ and $w^v_s(j)$, $j=1,2,\dots, n$ are the $i$th and $j$th components of eigenvectors $w^h_r$, $r=0,1,\dots,m-1$ and $w^v_s$, $s=0,1,\dots,n-1$, respectively.

	Recall that the transition probability matrices $P^h$ and $P^v$ commute. So, on using (\ref{matrixrep}), we can write the $k$-step transition probability matrix of the process $X$ in the following form:
	\begin{align*}
		P^{(k)}&=\sum_{r=0}^{k}\binom{k}{r}(P^h)^r(P^v)^{k-r}\\
		&=\sum_{r=0}^{k}\binom{k}{r}\left(B{A^h}B^{-1}\right)^r\left(B{A^v}B^{-1}\right)^{k-r}\\
		&=B\sum_{r=0}^{k}\binom{k}{r}{(A^h)}^r{(A^v)}^{k-r}B^{-1}.
	\end{align*}
On using (\ref{spectdec1}) and (\ref{spectdec2}), we get the $((i,j),(i',j'))$ entry of the $k$-step transition probability matrix $P$ as follows:
\begin{equation*}
	{p^{(k)}}_{(i,j),(i',j')}=b_{i,j}\left(\sum_{r=0}^{m-1}\sum_{s=0}^{n-1}(\lambda_r^h+\lambda^v_s)^kw^h_r(i)w^h_r(i')w^v_s(j)w^v_s(j')\right)b_{i',j'}^{-1}.
\end{equation*}
Therefore, for $l_1=l_2$, the problem of finding the $k$-step transition probabilities reduces to finding the eigenvalues-eigenvectors of some lower order symmetric and tri-diagonal matrices.

\begin{remark}
	Let $D$ denote the self transition probability matrix of $X$ and $I$ be the identity matrix of order equal to that of $D$. If the self transition is allowed for each state then $D$ commute with $P^h$ and $P^v$ if and only if it is a scalar matrix, that is, there exist a constant $\alpha\in(0,1)$ such that $D=\alpha I$. Thus, the $k$-step transition probability matrix of $X$ can be obtained as follows: 
	\begin{equation*}
		P^{(k)}=\sum_{a+b+c=k}\frac{k!}{a!b!c!}\alpha^a(P^h)^b(P^v)^c=B\sum_{a+b+c=k}\frac{k!}{a!b!c!}\alpha^a{(A^h)}^b{(A^v)}^cB^{-1}.
	\end{equation*}
	Therefore, the $((i,j),(i',j'))$ entry of matrix $P^{(k)}$ is given by
	\begin{equation*}
		{p^{(k)}}_{(i,j),(i',j')}=b_{i,j}\left(\sum_{r=0}^{m-1}\sum_{s=0}^{n-1}(\alpha+\lambda_r^h+\lambda^v_s)^kw^h_r(i)w^h_r(i')w^v_s(j)w^v_s(j')\right)b_{i',j'}^{-1}.
	\end{equation*}
\end{remark}

It is important to note that the transition probability matrix $P=P^h+P^v$ is not necessarily a stochastic matrix, that is, the row sum of $P$ may not be equal to one. As $P=B(A^h+A^v)B^{-1}$, the $((i,j),(i',j'))$ entry of $P$ is $p_{(i,j),(i',j')}=b_{i,j}{(A^h+A^v)}_{(i,j),(i',j')}b_{i',j'}^{-1}$. Hence, its row sum is equal to one if and only if $$\sum_{(i',j')\in G^{(2)}_{m-1,n-1}}{(A^h+A^v)}_{(i,j),(i',j')}b_{i',j'}^{-1}=b_{i,j}^{-1},$$ that is, $P$ is a stochastic matrix if and only if one is an eigenvalue of the matrix $A^h+A^v$ with corresponding eigenvector $(b_{i,j}^{-1},\ (i,j)\in G^{(2)}_{m-1,n-1})$. 
To illustrate this, we return to Example \ref{exp1}. In this case, $P$ is a stochastic matrix if and only if one is an eigenvalue of the following matrix:

\begin{equation}\label{AhAv}
		A^h+A^v=\ \begin{array}{c|ccccccccc}
			& (0,0) & (0,1) & (0,2)&(1,0)&(1,1)&(1,2)&(2,0)&(2,1)&(2,2) \\ \hline
			(0,0) & 0 & v_1 & v_3&h_1&0&0&h_3&0&0 \\
			(0,1) & v_1 & 0 & v_2&0&h_1&0 &0&h_3&0 \\
			(0,2) & v_3 & v_2 & 0&0&0&h_1&0&0&h_3\\
			(1,0)&h_1&0&0&0&v_1&v_3&h_2&0&0\\
			(1,1)&0&h_1&0&v_1&0&v_2&0&h_2&0\\
			(1,2)&0&0&h_1&v_3&v_2&0&0&0&h_2\\
			(2,0)&h_3&0&0&h_2&0&0&0&v_1&v_3\\
			(2,1)&0&h_3&0&0&h_2&0&v_1&0&v_2\\
			(2,2)&0&0&h_3&0&0&h_2&v_3&v_2&0\\
		\end{array}
\end{equation}
with corresponding eigenvector $(b_{0,0}^{-1},b_{0,1}^{-1},b_{0,2}^{-1},b_{1,0}^{-1},b_{1,1}^{-1},b_{1,2}^{-1},b_{2,0}^{-1},b_{2,1}^{-1},b_{2,2}^{-1})$.

Note that the matrix (\ref{AhAv}) is irreducible whenever the constants $h_i$ and $v_i$ are positive for all $i=1,2,3$. So, Theorem \ref{pthm2} implies that its largest eigenvalue is positive. Moreover, its corresponding eigenvector has positive entries and it is unique up to a constant multiple $c>0$. Thus, for all $i$, whenever the parameters $h_i$ and $v_i$ are known, these can replaced by $ch_i$ and $cv_i$, respectively, for an appropriate choice of $c>0$. Also, the constants $b_{i,j}$, $(i,j)\in G^{(2)}_{2,2}$ can be chosen which are unique up to a constant multiple such that the row sum of $P$ is equal to one. Further, if the self transition is allowed with constant self transition probability $\alpha\in(0,1)$ then we can replace $h_i$ and $v_i$ with $ch_i$ and $cv_i$ for a suitable constant $c>0$ which allow us to choose $b_{i,j}$ such that the row sum of $P^h+P^v$ equals $1-\alpha$. Hence, the row sum  of transition probability matrix $P=P^h+P^v+\alpha I$ is equal to one, where $I$ is an identity matrix of appropriate order.

\begin{remark}
	Let the GBDP be a step symmetric process in each coordinate directions, that is, $r_{i,j}(x)=l_{i,j}(x)$ and $u_{i,j}(y)=d_{i,j}(y)$ for all $x,y\in\{1,2,\dots,l\}$. Then, the horizontal transition probability matrix $P^h$ reduces to a block diagonal matrix with $n$ identical $m\times m$ symmetric and tri-diagonal block matrices $Q^h$, and $P^v$ reduces to a block diagonal with $m$ identical $n\times n$ symmetric and tri-diagonal matrices $Q^v$. So, the rows and columns of  transition probability matrices can be rearranged as follows:
	\begin{equation*}
		{P^h}_{(i,j),(i',j')}=\begin{cases}
			{Q^h}_{(i,i')},\ j=j',\\
			0,\ \text{otherwise}
		\end{cases}
		\ \ \text{and}\ \  {P^v}_{(i,j),(i',j')}=\begin{cases}
			{Q^v}_{(j,j')},\ i=i',\\
			0,\ \text{otherwise},
		\end{cases}
	\end{equation*}
	where ${P^h}_{(i,j),(i',j')}$ and ${P^v}_{(i,j),(i',j')}$ are the $((i,j),(i',j'))$ entry of $P^h$ and $P^v$, respectively. Also, $Q^h_{(i,i')}$  is an entry of block matrices $Q^h$ corresponding to the row indexed by $(i,j)$ and the column indexed by $(i',j)$ of $P^h$, and  $Q^v_{(j,j')}$ is an entry of $Q^v$ corresponding to the row indexed by $(i,j)$ and  the column indexed by $(i,j')$ of $P^v$.
	
	On using the spectral decomposition theorem for a real symmetric matrix, we get the entries of $(Q^h)^k$ and $(Q^v)^k$ corresponding to $Q^h_{(i,i')}$ and $Q^v_{(j,j')}$ as follows:
	\begin{equation*}
		{(Q^h)^k}_{(i,i')}=\sum_{r=0}^{m-1}(\mu^h_r)^kz^h_r(i)z_r^h(i')\ \ \text{and} \ \ {(Q^v)^k}_{(j,j')}=\sum_{s=0}^{n-1}(\mu^v_s)^kz^v_s(j)z_s^v(j').
	\end{equation*}
	 Here, for $r=0,1,\dots,m-1$ and $s=0,1,\dots,n-1$, $\mu_r^h$  and $\mu_s^v$ are the eigenvalues  of $Q^h$ and $Q^v$ with corresponding orthonormalized eigenvectors $z^h_r$ and $z^v_s$, respectively. Also, $z_r^h(i)$ and $z_s^v(j)$ are the $i$th and $j$th elements of the vectors $z_r^h$ and $z_s^v$, respectively. Thus, the $((i,j),(i',j'))$ entry of the $k$-step transition probability matrix $P^{(k)}$ is given by
	\begin{equation*}
		p^{(k)}_{(i,j),(i',j')}=\sum_{r=0}^{m-1}\sum_{s=0}^{n-1}(\mu_r^h+\mu_s^v)^kz^h_r(i)z_r^h(i')z^v_s(j)z_s^v(j').
	\end{equation*}
\end{remark}

Finally, let us consider a case where $l_1\ne l_2$.
For $m=3$, $n=3$, $l_1=2$ and $l_2=1$ in Example \ref{exp1}, the transition probabilities $l_{2,0}(2)$, $l_{2,1}(2)$, $l_{2,2}(2)$, $d_{0,2}(2)$, $d_{1,2}(2)$ and $d_{2,2}(2)$ are equal to zero. Further, the corresponding matrices $A^h$ and $A^v$ are block diagonal but the block matrices $U^h$ and $U^v$ are not symmetric. Hence, the spectral decomposition method can not be utilized to obtain the $k$-step transition probability matrix. That is, such a decomposition for the transition probability matrix of GBDP on $G^{(2)}_{m-1,n-1}$ is valid if and only if the maximum step size of transition is same for each possible directions. In the next section, we shall see that a similar result holds for the GBDB whose state space is a $q$-dimensional finite grid.

\section{GBDP on $q$-dimensional grid}\label{sec3}	
In this section, we  consider a discrete-time birth-death chain $X=\{X_k\}_{k\ge0}$ whose state space is the following $q$-dimensional finite grid:
\begin{equation*}
G^{(q)}_{n_1,n_2,\dots,n_q}=\{0,1,\dots,n_1\}\times\{0,1,\dots,n_2\}\times\dots\times\{0,1,\dots,n_q\},\ q\ge1.
\end{equation*}
Here, $n_i\ge2$ for all $i=1,2,\dots,q$. So, $G^{(q)}_{n_1,n_2,\dots,n_q}$ is the indexing set for the rows and columns of the transition probability matrix of $X$. Hereafter, we use bold lowercase letters to denote vectors. Let $\textbf{e}_i(x)$, $i\in\{1,2,\dots,q\}$ denote a $q$-dimensional vector whose $i$th coordinate is $x\in\mathbb{N}$ and all other coordinates are $0$. Also, let any two distinct vertices $\textbf{u}$ and $\textbf{v}$ of $G^{(q)}_{n_1,n_2,\dots,n_q}$ which are connected directly to each other be denoted by $\textbf{u}\leftrightarrow\textbf{v}$. So, $\textbf{u}\leftrightarrow\textbf{v}$ if and only if either $\textbf{u}-\textbf{v}\in\{\pm\textbf{e}_i(x)\}$ for some $i\in\{1,2,\dots,q\}$ and $x\in\{1,2,\dots,l_1\}$ or $\textbf{u}-\textbf{v}\in\{\pm\textbf{e}_i(y)\}$ for some $i\in\{1,2,\dots,q\}$ and $y\in\{1,2,\dots,l_2\}$. Further, we assume that $\max\{l_1,l_2\}<\min\{n_1,n_2,\dots,n_q\}$.

 Let $\{\textbf{u},\textbf{v}\}\subset G^{(q)}_{n_1,n_2,\dots,n_q}$ and $P=(p(\textbf{u},\textbf{v}))$ be the one step transition probability matrix of the process $X$. Here, $p(\textbf{u},\textbf{v})=\mathbb{P}\{X_{k+1}=\textbf{v}|X_k=\textbf{u}\}$ denotes the probability of transition from state $\textbf{u}$ to state $\textbf{v}$. Note that $p(\textbf{u},\textbf{v})=0$ whenever $\textbf{u}\nleftrightarrow \textbf{v}$, that is, $\textbf{u}$ and $\textbf{v}$ are not connected by an edge. For $i\in\{1,2,\dots,q\}$, $x\in\{1,2,\dots,l_1\}$ and $y\in\{1,2,\dots,l_2\}$, define the transition probability matrix $P_i=(p_i(\textbf{u},\textbf{v}))$ of $i$th coordinate direction as follows:
\begin{equation*}
	p_i(\textbf{u},\textbf{v})=\begin{cases}
		p(\textbf{u},\textbf{v}),\ \textbf{u}-\textbf{v}=-\textbf{e}_i(x)\ \mathrm{or}\ \textbf{u}-\textbf{v}=\textbf{e}_i(y),\\
		0,\ \mathrm{otherwise}.
	\end{cases}
\end{equation*}
The matrices $P_i$'s are two dimensional analogues of matrices $P^h$ and $P^v$ on $G^{(2)}_{m-1,n-1}$ as defined in Section \ref{sec2}. Similar to the two dimensional case, the matrices $P_i$ and $P_j$ commute with each other if and only if for $(x,x')\in\{1,2,\dots,l_1\}^2$ and $(y,y')\in\{1,2,\dots,l_2\}^2$, the following constraints hold:
	{\small\begin{equation}\label{multequ}
		\begin{aligned}
			p(\textbf{u},\textbf{u}+\textbf{e}_i(x))p(\textbf{u}+\textbf{e}_i(x),\textbf{u}+\textbf{e}_i(x) + \textbf{e}_j(x'))&=p(\textbf{u},\textbf{u}+\textbf{e}_j(x'))p(\textbf{u}+\textbf{e}_j(x'),\textbf{u}+\textbf{e}_j(x')+\textbf{e}_i(x)),\\
			p(\textbf{u},\textbf{u}+\textbf{e}_i(x))p(\textbf{u}+\textbf{e}_i(x),\textbf{u}+\textbf{e}_i(x) - \textbf{e}_j(y))&=p(\textbf{u},\textbf{u}-\textbf{e}_j(y))p(\textbf{u}-\textbf{e}_j(y),\textbf{u}-\textbf{e}_j(y)+\textbf{e}_i(x)),\\
			p(\textbf{u},\textbf{u}-\textbf{e}_i(y))p(\textbf{u}-\textbf{e}_i(y),\textbf{u}-\textbf{e}_i(y) + \textbf{e}_j(x))&=p(\textbf{u},\textbf{u}+\textbf{e}_j(x))p(\textbf{u}+\textbf{e}_j(x),\textbf{u}+\textbf{e}_j(x)-\textbf{e}_i(y))
		\end{aligned}
	\end{equation}}
and
{\small\begin{equation*}
	p(\textbf{u},\textbf{u}-\textbf{e}_i(y))p(\textbf{u}-\textbf{e}_i(y),\textbf{u}-\textbf{e}_i(y) - \textbf{e}_j(y'))=p(\textbf{u},\textbf{u}-\textbf{e}_j(y'))p(\textbf{u}-\textbf{e}_j(y'),\textbf{u}-\textbf{e}_j(y')-\textbf{e}_i(y)).
\end{equation*}}
	
Here, our main goal is to solve the system of equations (\ref{multequ}) for an arbitrary $p(\textbf{u},\textbf{v})$. In the following result, we obtain the solution when all the transition probabilities of $X$ are positive. 	

\begin{theorem}\label{thm2}
	Let $l_1=l_2=l$ and $\{\textbf{u},\textbf{v}\}\subset G^{(q)}_{n_1,n_2,\dots,n_q}$, and let $p(\textbf{u},\textbf{v})>0$ whenever $\textbf{u}\leftrightarrow\textbf{v}$. Then, the transition probability matrices $P_1,P_2,\dots,P_q$ are pairwise commutative if and only if 
	\begin{equation}\label{pf1}
		p(\textbf{u},\textbf{v})=\alpha_{\textbf{u}} \Gamma(\textbf{u},\textbf{v})\alpha_{\textbf{v}}^{-1},
	\end{equation}
	where $\alpha_{\textbf{u}}$ and $\alpha_{\textbf{v}}$ are some positive constants which are unique up to constant multiplication. Here, $\Gamma(\textbf{u},\textbf{v})$ are unique positive constants such that $\Gamma(\textbf{u},\textbf{v})=\Gamma(\textbf{u}+\textbf{u}',\textbf{v}+\textbf{u}')$ whenever $\textbf{u}-\textbf{v}=e_i(x)$ or $\textbf{u}-\textbf{v}=-e_i(x)$, $x\in\{1,2,\dots,l\}$, $i=1,2,\dots,q$ and for some $\textbf{u}'\in\{\sum_{j\ne i}a_{j}\textbf{e}_j(1):a_{j}\in\mathbb{Z}\}$.
\end{theorem}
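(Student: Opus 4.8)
The plan is to prove the two implications separately, the ``only if'' direction being the substantial one. For the sufficiency direction I would first observe that for $i\ne j$ the relation $P_iP_j=P_jP_i$ is equivalent to the rectangle identities (\ref{multequ}): in a product such as $(P_iP_j)(\textbf{u},\textbf{w})$ the intermediate state is uniquely forced (its $i$-th coordinate must equal that of $\textbf{w}$ and all its other coordinates that of $\textbf{u}$), so the sum collapses to a single term $p(\textbf{u},\textbf{v})p(\textbf{v},\textbf{w})$, and equating it with the corresponding term of $P_jP_i$ over all $(\textbf{u},\textbf{w})$ reproduces exactly the four families in (\ref{multequ}). Substituting the parametrization (\ref{pf1}) into any such identity, the two $\alpha$-factors at the shared endpoints survive and the middle one cancels, leaving an equality between two products of $\Gamma$'s; each $\Gamma$-factor on one side is obtained from a $\Gamma$-factor on the other by translating its edge orthogonally to the edge's own direction (the $i$-edge gets shifted by $\textbf{e}_j(x')$, the $j$-edge by $\textbf{e}_i(x)$), so the stated invariance of $\Gamma$ makes the two sides agree and the matrices commute.

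For necessity, assume (\ref{multequ}) holds and all $p(\textbf{u},\textbf{v})>0$ when $\textbf{u}\leftrightarrow\textbf{v}$. I would fix $\textbf{0}$ as base point, put $\alpha_{\textbf{0}}=1$, and define $\alpha$ along unit edges by $\alpha_{\textbf{u}}/\alpha_{\textbf{u}+\textbf{e}_i(1)}=\bigl(p(\textbf{u},\textbf{u}+\textbf{e}_i(1))/p(\textbf{u}+\textbf{e}_i(1),\textbf{u})\bigr)^{1/2}$, extending to every vertex along a path of unit steps. This is consistent since every closed loop on the grid is built from elementary unit squares and, around a unit square spanned by directions $i$ and $j$, the product of these square roots is $1$; the latter is just the statement that the product of the four transition probabilities around the square is the same in both orientations, which follows by multiplying the identities of (\ref{multequ}) attached to $(\textbf{e}_i(1),\textbf{e}_j(1))$ based at $\textbf{u}$ and to $(-\textbf{e}_i(1),-\textbf{e}_j(1))$ based at $\textbf{u}+\textbf{e}_i(1)+\textbf{e}_j(1)$. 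With $\alpha$ fixed I would then \emph{set} $\Gamma(\textbf{u},\textbf{v}):=p(\textbf{u},\textbf{v})\,\alpha_{\textbf{v}}\,\alpha_{\textbf{u}}^{-1}$ on every edge, so that (\ref{pf1}) holds by definition and $\Gamma$ is symmetric; the whole content is then the translation invariance of $\Gamma$.

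For a unit $i$-edge the invariance follows by multiplying the identities of (\ref{multequ}) for $(\textbf{e}_i(1),\textbf{e}_j(1))$ based at $\textbf{u}$ and for $(-\textbf{e}_i(1),\textbf{e}_j(1))$ based at $\textbf{u}+\textbf{e}_i(1)$ and cancelling common factors, which yields $p(\textbf{u},\textbf{u}+\textbf{e}_i(1))p(\textbf{u}+\textbf{e}_i(1),\textbf{u})=p(\textbf{u}+\textbf{e}_j(1),\textbf{u}+\textbf{e}_j(1)+\textbf{e}_i(1))p(\textbf{u}+\textbf{e}_j(1)+\textbf{e}_i(1),\textbf{u}+\textbf{e}_j(1))$, i.e.\ $\Gamma(\textbf{u},\textbf{u}+\textbf{e}_i(1))^2$ is invariant under the $\textbf{e}_j(1)$-shift. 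For a long edge $\textbf{u}\to\textbf{u}+\textbf{e}_i(x)$ with $2\le x\le l$, I would use the identity of (\ref{multequ}) for $(\textbf{e}_i(x),\textbf{e}_j(1))$ based at $\textbf{u}$, rewrite the two unit $j$-edges appearing in it via (\ref{pf1}), invoke the unit-edge invariance to see that those two $\Gamma$-factors coincide, cancel them, and rearrange; the identity becomes $\Gamma(\textbf{u},\textbf{u}+\textbf{e}_i(x))=\Gamma(\textbf{u}+\textbf{e}_j(1),\textbf{u}+\textbf{e}_j(1)+\textbf{e}_i(x))$. Letting $j$ vary over the indices $\ne i$ and chaining such shifts inside the slice on which the $i$-th coordinate is constant, $\Gamma(\textbf{u},\textbf{u}+\textbf{e}_i(x))$ depends only on $i$, $u_i$ and $u_i+x$; the backward long edges are identical with $-\textbf{e}_i(y)$ in place of $\textbf{e}_i(x)$. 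This gives (\ref{pf1}) with the required invariance, and uniqueness is immediate: under the symmetric normalization the edge ratios $\alpha_{\textbf{u}}/\alpha_{\textbf{v}}$ are determined by $p$, so $\alpha$ is determined up to the scalar $\alpha_{\textbf{0}}$ and $\Gamma$ outright.

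I expect the main obstacle to be the long steps. None of the identities (\ref{multequ}) ever relates two transition probabilities sitting on the same coordinate line, so a transition of size $x\ge2$ is pinned down only through its interaction with unit steps in the remaining directions; obtaining the translation invariance of $\Gamma$ — and, as a by-product, the ``single-line'' consistency relations that are forced, after the fact, by (\ref{pf1}) — out of those mixed identities is where the care is needed. The sufficiency direction and the cancellation bookkeeping are routine.
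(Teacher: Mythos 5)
Your necessity argument takes a genuinely different route from the paper's. The paper first claims that commutation forces reversibility, builds a measure via path products in which the paths may use long steps, sets $\alpha_{\textbf{u}}=\beta_{\textbf{u}}^{-1/2}$, obtains a \emph{symmetric} $\Gamma$, and only then reduces the resulting $\Gamma$-system to translation invariance using that symmetry. You instead build $\alpha$ from unit edges only (consistency coming from the unit-square cycle condition, which does follow from (\ref{multequ}) exactly as you indicate, and the cycle space of the unit-edge grid graph is indeed spanned by unit squares), and then derive translation invariance of $\Gamma$ directly: on unit edges from the two-sided products, and on a long edge $\textbf{u}\to\textbf{u}+\textbf{e}_i(x)$ by cancelling the two unit $j$-edge factors in the $(\textbf{e}_i(x),\textbf{e}_j(1))$ identity. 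That chain of deductions is correct, and it is more careful than the paper at precisely the delicate point, because it never compares two probabilities lying on the same coordinate line.

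The genuine gap is your assertion that the $\Gamma$ you define is symmetric, together with the uniqueness paragraph that rests on ``the symmetric normalization''. On unit edges symmetry holds by your choice of $\alpha$, but on a long edge with $x\ge2$ symmetry of $\Gamma$ is exactly the single-line detailed-balance relation expressing $p(\textbf{u},\textbf{u}+\textbf{e}_i(x))/p(\textbf{u}+\textbf{e}_i(x),\textbf{u})$ as the product of the unit-step ratios along the segment, and this does \emph{not} follow from (\ref{multequ}): each identity there contains on either side exactly one $i$-transition, of the same signed size on both sides, so multiplying all transitions of one fixed direction and signed size (say every rightward jump of size $2$) by a constant $c\ne1$ preserves every commutation identity while destroying the single-line relation and reversibility. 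Hence ``forced, after the fact, by (\ref{pf1})'' is not right: (\ref{pf1}) with merely translation-invariant $\Gamma$ does not imply it, and a commuting chain with $l\ge2$ need not be reversible. Fortunately your derivation of translation invariance never uses long-edge symmetry, so the existence half of the parametrization stands; but uniqueness as you argue it does not: without the symmetric normalization (which in general cannot be achieved), $\alpha$ may be altered by any factor of product form $\prod_i g_i(u_i)$ while keeping $\Gamma$ translation invariant, so it is not pinned down up to a single scalar. You should be aware that the paper's own proof stumbles on the same stone: its reversibility claim and the asserted path-independence of (\ref{p3}) over paths containing long steps require exactly these single-line relations, which the commutation conditions do not supply when $l\ge2$; so relative to the paper you have lost nothing on existence, but the symmetry claim and the uniqueness argument need to be repaired or the normalization made explicit.
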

	
\begin{proof}
	For if part, let us assume that (\ref{pf1}) holds. Then, we have
	\begin{align*}			p(\textbf{u},\textbf{u}+&\textbf{e}_i(x))p(\textbf{u}+\textbf{e}_i(x),\textbf{u}+\textbf{e}_i(x) + \textbf{e}_j(x'))\\
		&=\alpha_{\textbf{u}}\Gamma(\textbf{u},\textbf{u}+\textbf{e}_i(x))\alpha_{\textbf{u}+\textbf{e}_i(x)}^{-1}\alpha_{\textbf{u}+\textbf{e}_i(x)}\Gamma(\textbf{u}+\textbf{e}_i(x),\textbf{u}+\textbf{e}_i(x) + \textbf{e}_j(x'))\alpha_{\textbf{u}+\textbf{e}_i(x) + \textbf{e}_j(x')}^{-1}\\
		&=\alpha_{\textbf{u}}\Gamma(\textbf{u},\textbf{u}+\textbf{e}_j(x'))\alpha_{\textbf{u}+\textbf{e}_j(x')}^{-1}\alpha_{\textbf{u}+\textbf{e}_j(x')}\Gamma(\textbf{u}+\textbf{e}_j(x'),\textbf{u}+\textbf{e}_j(x') + \textbf{e}_i(x))\alpha_{\textbf{u}+\textbf{e}_j(x') + \textbf{e}_i(x)}^{-1}\\
		&=p(\textbf{u},\textbf{u}+\textbf{e}_j(x'))p(\textbf{u}+\textbf{e}_j(x'),\textbf{u}+\textbf{e}_j(x')+\textbf{e}_i(x)),
	\end{align*}
	where we have used $\Gamma(\textbf{u},\textbf{u}+\textbf{e}_i(x))=\Gamma(\textbf{u}+\textbf{e}_j(x'),\textbf{u}+\textbf{e}_j(x') + \textbf{e}_i(x))$ and $\Gamma(\textbf{u}+\textbf{e}_i(x),\textbf{u}+\textbf{e}_i(x) + \textbf{e}_j(x'))=\Gamma(\textbf{u},\textbf{u}+\textbf{e}_j(x'))$ to get the second equality. This prove that the first equality in (\ref{multequ}) holds.  Similarly, the remaining three equalities in (\ref{multequ}) can be established. Thus, the transition matrices $P_i$'s are pairwise commutative.
	
	 For only if part, we show that if $P_1,P_2,\dots,P_q$ commute then $X$ is a reversible chain. In view of Theorem \ref{pthm3}, it is enough to show that there exist a measure $\boldsymbol{\beta}=(\beta_\textbf{u}:\textbf{u}\in G^{(q)}_{n_1,n_2,\dots,n_q})$ such that $\beta_\textbf{u} p(\textbf{u},\textbf{v})=\beta_{\textbf{v}} p(\textbf{v},\textbf{u})$ for all $\{\textbf{u},\,\textbf{v}\}\subset G^{(q)}_{n_1,n_2,\dots,n_q}$.
	 
	 To show the existence of measure $\boldsymbol{\beta}$, let $\textbf{0}$ be the zero vector in $G^{(q)}_{n_1,n_2,\dots,n_q}$ and denote $||\textbf{u}||=u_1+u_2+\dots+u_q$ for $\textbf{u}\in G^{(q)}_{n_1,n_2,\dots,n_q}$. Now, for any fix positive constant $\beta_{\textbf{0}}$, we construct a measure $\boldsymbol{\beta}$ recursively. For $||\textbf{u}||\in\{1,2,\dots,l\}$ such that $\textbf{u}=\textbf{e}_i(x)$, $x\in\{1,2,\dots,l\}$ for some $i$, we take $\beta_{\textbf{u}}=\beta_{\textbf{0}}p(\textbf{0},\textbf{u})/p(\textbf{u},\textbf{0})$. Also, for $||\textbf{u}||=1$, $\textbf{u}\in G^{(q)}_{n_1,n_2,\dots,n_q}$, we need to find a $\textbf{v}\in G^{(q)}_{n_1,n_2,\dots,n_q}$ with $||\textbf{v}||\in\{2,3,\dots,l+1\}$ such that $\beta_{\textbf{u}}p(\textbf{u},\textbf{v})=\beta_{\textbf{v}}p(\textbf{v},\textbf{u})$. Equivalently, we need to find $\beta_{\textbf{v}}$, $\textbf{v}\in G^{(q)}_{n_1,n_2,\dots,n_q}$ which satisfy $\beta_{\textbf{u}}p(\textbf{u},\textbf{v})=\beta_{\textbf{v}}p(\textbf{v},\textbf{u})$ for all $\textbf{u}=\textbf{v}-\textbf{e}_i(x)$, $x\in\{1,2,\dots,||\textbf{v}||-1\}$ with $||\textbf{u}||=1$ for some $i\in\{1,2,\dots,q\}$. 
	 
	 We have the following two cases:\\ 
	 \textbf{\textit{Case I.}} If $\textbf{v}=x\textbf{e}_i(1)$, $x\in\{2,3,\dots,l+1\}$ for some $i\in\{1,2,\dots,q\}$ then we take
	 \begin{equation*}
	 	\beta_{\textbf{v}}=\beta_{\textbf{e}_i(1)}\frac{p(\textbf{e}_i(1),x\textbf{e}_i(1))}{p(x\textbf{e}_i(1),\textbf{e}_i(1))}=\beta_{\textbf{0}}\frac{p(\textbf{0},\textbf{e}_i(1))p(\textbf{e}_i(1),x\textbf{e}_i(1))}{p(\textbf{e}_i(1),\textbf{0})p(x\textbf{e}_i(1),\textbf{e}_i(1))}.
	 \end{equation*}\\
	 \textbf{\textit{Case II.}} If $\textbf{v}=\textbf{e}_j(1)+\textbf{e}_k(x)$, $x\in\{1,2,\dots,l\}$ for some $j\ne k$ then for $x=1$, we take 
	 \begin{equation*}
	 	\beta_{\textbf{e}_j(1)+\textbf{e}_k(1)}=\beta_{\textbf{e}_j(1)}\frac{p(\textbf{e}_j(1),\textbf{e}_j(1)+\textbf{e}_k(1))}{p(\textbf{e}_j(1)+\textbf{e}_k(1),\textbf{e}_j(1))}=\beta_{\textbf{0}}\frac{p(\textbf{0},\textbf{e}_j(1))p(\textbf{e}_j(1),\textbf{e}_j(1)+\textbf{e}_k(1))}{p(\textbf{e}_j(1),\textbf{0})p(\textbf{e}_j(1)+\textbf{e}_k(1),\textbf{e}_j(1))}
	 \end{equation*}
	 or
	 \begin{equation*}
	 	\beta_{\textbf{e}_j(1)+\textbf{e}_k(1)}=\beta_{\textbf{e}_k(1)}\frac{p(\textbf{e}_k(1),\textbf{e}_k(1)+\textbf{e}_j(1))}{p(\textbf{e}_k(1)+\textbf{e}_j(1),\textbf{e}_k(1))}=\beta_{\textbf{0}}\frac{p(\textbf{0},\textbf{e}_k(1))p(\textbf{e}_k(1),\textbf{e}_k(1)+\textbf{e}_j(1))}{p(\textbf{e}_k(1),\textbf{0})p(\textbf{e}_k(1)+\textbf{e}_j(1),\textbf{e}_k(1))}.
	 \end{equation*} 
	 However, on using (\ref{multequ}), we have
	 \begin{equation*}
	 	p(\textbf{0},\textbf{e}_k(1))p(\textbf{e}_k(1),\textbf{e}_k(1)+\textbf{e}_j(1))=p(\textbf{0},\textbf{e}_j(1))p(\textbf{e}_j(1),\textbf{e}_j(1)+\textbf{e}_k(1))
	 \end{equation*}
	 and
	 \begin{equation*}
	 	p(\textbf{e}_j(1),\textbf{0})p(\textbf{e}_j(1)+\textbf{e}_k(1),\textbf{e}_j(1))=p(\textbf{e}_k(1),\textbf{0})p(\textbf{e}_k(1)+\textbf{e}_j(1),\textbf{e}_k(1)).
	 \end{equation*}
	 Thus, $\beta_{\textbf{e}_j(1)+\textbf{e}_k(1)}$ is uniquely determined. Now, suppose $1<x\leq l$. Then, $\beta_{\textbf{v}}$ must have the following unique representation:
	 \begin{equation*}
	 	\beta_{\textbf{v}}=\beta_{\textbf{e}_j(1)+\textbf{e}_k(x)}=\beta_{\textbf{e}_j(1)}\frac{p(\textbf{e}_j(1),\textbf{e}_j(1)+\textbf{e}_k(x))}{p(\textbf{e}_j(1)+\textbf{e}_k(x),\textbf{e}_j(1))}=\beta_{\textbf{0}}\frac{p(\textbf{0},\textbf{e}_j(1))p(\textbf{e}_j(1),\textbf{e}_j(1)+\textbf{e}_k(x))}{p(\textbf{e}_j(1),\textbf{0})p(\textbf{e}_j(1)+\textbf{e}_k(x),\textbf{e}_j(1))}.
	 \end{equation*} 
	 Proceeding along the similar lines, for any $\textbf{u}\in G^{(q)}_{n_1,n_2,\dots,n_q}$, we choose $\textbf{u}_0=\textbf{0}$, $\textbf{u}_1$, $\dots$, $\textbf{u}_N=\textbf{u}$ from $G^{(q)}_{n_1,n_2,\dots,n_q}$ such that $\textbf{u}_{r+1}-\textbf{u}_r\in\{\textbf{e}_1(x),\textbf{e}_2(x),\dots,\textbf{e}_q(x)\}_{1\leq x\leq l}$, $0\leq r\leq N-1$, $N\in\mathbb{N}$ and take
	 \begin{equation}\label{p3}
	 	\beta_{\textbf{u}}=\prod_{r=0}^{N-1}\frac{p(\textbf{u}_r,\textbf{u}_{r+1})}{p(\textbf{u}_{r+1},\textbf{u}_r)}. 
	 \end{equation}
	 Note that the right hand side of (\ref{p3}) is independent of the choice of $\textbf{u}_1,\textbf{u}_2,\dots,\textbf{u}_{N-1}$. Moreover, given the value of $\beta_\textbf{0}$, the constant $\beta_{\textbf{u}}$, $\textbf{u}\in G^{(q)}_{n_1,n_2,\dots,n_q}$ are uniquely determined by (\ref{p3}) that satisfy the property $\beta_{\textbf{u}}p(\textbf{u},\textbf{v})=\beta_{\textbf{v}}p(\textbf{v},\textbf{u})$ for all $\{\textbf{u},\textbf{v}\}\subset G^{(q)}_{n_1,n_2,\dots,n_q}$. This prove that $X$ is reversible.
	 
	  Now, we take $\alpha_{\textbf{u}}=\beta_{\textbf{u}}^{-1/2}$ and define $\Gamma(\textbf{u},\textbf{v})$ as follows:
	 \begin{equation}\label{Gamma1}
	 	\Gamma(\textbf{u},\textbf{v})=\alpha_{\textbf{u}}^{-1}p(\textbf{u},\textbf{v})\alpha_{\textbf{v}}.
	 \end{equation}
	 Then, 
	 \begin{equation}\label{Gamma2}
	 	p(\textbf{u},\textbf{v})=\alpha_{\textbf{u}}\Gamma(\textbf{u},\textbf{v})\alpha_{\textbf{v}}^{-1}\ \ \mathrm{and}\ \ \Gamma(\textbf{u},\textbf{v})=\Gamma(\textbf{v},\textbf{u}),
	 \end{equation}
	 where we have used $\beta_{\textbf{u}}p(\textbf{u},\textbf{v})=\beta_{\textbf{v}}p(\textbf{v},\textbf{u})$ to get the second equality of (\ref{Gamma2}).
	 Conversely, if there exist some constants $\alpha_{\textbf{u}}$ such that  (\ref{Gamma1}) and (\ref{Gamma2}) hold then $\alpha_{\textbf{u}}^{-2}p(\textbf{u},\textbf{v})=\alpha_{\textbf{v}}^{-2}p(\textbf{v},\textbf{u})$ for all $\{\textbf{u},\textbf{v}\}\subset G^{(q)}_{n_1,n_2,\dots,n_q}$, that is, $X$ is a reversible Markov chain. Thus, we proved that there exist constants $\Gamma(\textbf{u},\textbf{v})$ and $\alpha_{\textbf{u}}$, $\textbf{u}\in G^{(q)}_{n_1,n_2,\dots,n_q}$ which agree with (\ref{Gamma1}) and (\ref{Gamma2}). 
	 
	 Let $\{\tilde{\alpha}_{\textbf{u}}$, $\textbf{u}\in G^{(q)}_{n_1,n_2,\dots,n_q}\}$ be another set of constants that satisfies the conditions (\ref{Gamma1}) and (\ref{Gamma2}). Then, $\alpha_{\textbf{u}}\alpha_{\textbf{v}}^{-1}=\tilde{\alpha}_{\textbf{u}}\tilde{\alpha}_{\textbf{v}}^{-1}$ for all $\{\textbf{u},\,\textbf{v}\}\subset G^{(q)}_{n_1,n_2,\dots,n_q}$. Thus, $\alpha_{\textbf{u}}$'s are unique up to constant multiplication and $\Gamma(\textbf{u},\textbf{v})$'s are uniquely determined.
	 
	  As $p(\textbf{u},\textbf{v})$ solves the system of equations given in (\ref{multequ}) so does $\Gamma(\textbf{u},\textbf{v})$. Thus, the set of constraints corresponding to a particular sub-graph of $G^{(q)}_{n_1,n_2,\dots,n_q}$ with vertices $\{\textbf{u},\textbf{u}+\textbf{e}_i(x),\textbf{u}+\textbf{e}_j(x),\textbf{u}+\textbf{e}_i(y),\textbf{u}+\textbf{e}_j(y),\textbf{u}+\textbf{e}_i(x)+\textbf{e}_j(x'),\textbf{u}+\textbf{e}_i(x)+\textbf{e}_j(y),\textbf{u}+\textbf{e}_i(y)+\textbf{e}_j(x),\textbf{u}+\textbf{e}_i(y)+\textbf{e}_j(y'):\ \{x,x',y,y'\}\subset\{1,2,\dots,l\}\}$ is given by
	 {\small\begin{equation}\label{multequgamma}
	 	\begin{aligned}
	 		&\Gamma(\textbf{u},\textbf{u}+\textbf{e}_i(x))\Gamma(\textbf{u}+\textbf{e}_i(x),\textbf{u}+\textbf{e}_i(x) + \textbf{e}_j(x'))
	 		=\Gamma(\textbf{u},\textbf{u}+\textbf{e}_j(x'))\Gamma(\textbf{u}+\textbf{e}_j(x'),\textbf{u}+\textbf{e}_j(x')+\textbf{e}_i(x)),\\
	 		&\Gamma(\textbf{u}+\textbf{e}_i(y),\textbf{u}+\textbf{e}_i(y)+\textbf{e}_j(x))\Gamma(\textbf{u}+\textbf{e}_i(y)+\textbf{e}_j(x),\textbf{u}+\textbf{e}_j(x))
	 		=\Gamma(\textbf{u}+\textbf{e}_i(y),\textbf{u})\Gamma(\textbf{u},\textbf{u}+\textbf{e}_j(x)),\\
	 		&\Gamma(\textbf{u}+\textbf{e}_i(y)+\textbf{e}_j(y'),\textbf{u}+\textbf{e}_j(y'))\Gamma(\textbf{u}+ \textbf{e}_j(y'),\textbf{u})=\Gamma(\textbf{u}+\textbf{e}_i(y)+\textbf{e}_j(y'),\textbf{u}+\textbf{e}_i(y))\Gamma(\textbf{u}+\textbf{e}_i(y),\textbf{u})
	 	\end{aligned}
	 \end{equation}	}
	 and
	 {\small\begin{equation*}
	 	\Gamma(\textbf{u}+\textbf{e}_j(y),\textbf{u})\Gamma(\textbf{u},\textbf{u}+\textbf{e}_i(x))=\Gamma(\textbf{u}+\textbf{e}_j(y),\textbf{u}+\textbf{e}_j(y)+\textbf{e}_i(x))\Gamma(\textbf{u}+\textbf{e}_j(y)+\textbf{e}_i(x),\textbf{u}+\textbf{e}_i(x)).
	 \end{equation*}}
	  As $x$, $x'$, $y$ and $y'$ take values in same set and (\ref{multequgamma}) holds for their all possible values, without loss of generality, we can take $y'=x$ and $y=x'$ in (\ref{multequgamma}). On using the second equality in (\ref{Gamma2}), it can be shown that the set of four equalities in (\ref{multequgamma}) is equivalent to its first two equalities, that is, in presence of first two equalities the last two equalities  are redundant. Hence, the system of equations (\ref{multequgamma}) is equivalent to 
	  \begin{equation*}
	  	\Gamma(\textbf{u},\textbf{u}+\textbf{e}_i(x))\Gamma(\textbf{u}+\textbf{e}_i(x),\textbf{u}+\textbf{e}_i(x) + \textbf{e}_j(y))
	  	=\Gamma(\textbf{u},\textbf{u}+\textbf{e}_j(y))\Gamma(\textbf{u}+\textbf{e}_j(y),\textbf{u}+\textbf{e}_j(y)+\textbf{e}_i(x))
	  	\end{equation*}
	  	and
	  	\begin{equation*}
	  	\Gamma(\textbf{u}+\textbf{e}_i(y),\textbf{u}+\textbf{e}_i(y)+\textbf{e}_j(x))\Gamma(\textbf{u}+\textbf{e}_j(x),\textbf{u}+\textbf{e}_i(y)+\textbf{e}_j(x))=\Gamma(\textbf{u},\textbf{u}+\textbf{e}_i(y))\Gamma(\textbf{u},\textbf{u}+\textbf{e}_j(x)).
	  \end{equation*}
	  This system further reduces to
	\begin{equation*}
			\Gamma(\textbf{u},\textbf{u}+\textbf{e}_i(x))=\Gamma(\textbf{u}+\textbf{e}_j(y),\textbf{u}+\textbf{e}_j(y)+\textbf{e}_i(x))
			\end{equation*}
			and
			\begin{equation*}
			\Gamma(\textbf{u},\textbf{u}+\textbf{e}_j(y))=\Gamma(\textbf{u}+\textbf{e}_i(x),\textbf{u}+\textbf{e}_i(x)+\textbf{e}_j(y)).
	\end{equation*}	
On iterating these two equations, we get $\Gamma(\textbf{u},\textbf{u}+\textbf{e}_i(x))=\Gamma(\textbf{u}+a_{j}\textbf{e}_j(y),\textbf{u}+a_{j}\textbf{e}_j(y)+\textbf{e}_i(x))$ for some $a_{j}\in\mathbb{Z}$. Iterating further will establish the additional properties of $\Gamma(\textbf{u},\textbf{v})$ as stated in the theorem. This completes the proof.
\end{proof}
\begin{remark}
	For $l_1=l_2=1$, Theorem \ref{thm2} reduces to Theorem 3.1 of Evans \textit{et al.} (2010).
\end{remark}
\begin{remark}\label{3.1}
	Note that if $\textbf{u}-\textbf{v}\in\{\pm\textbf{e}_i(1), \pm\textbf{e}_i(2),\dots, \pm\textbf{e}_i(l)\}$ then there exist a unique $r\in\{0,1,2,\dots,n_i-x\}$, $i\in\{1,2,\dots,q\}$ and $\textbf{w}\in\{\sum_{j\ne i}a_{j}\textbf{e}_j(1):a_j\in\mathbb{Z}\}$ such that $\textbf{u}=r\textbf{e}_i(1)+\textbf{w}$ and $\textbf{v}=(r+x)\textbf{e}_i(1)+\textbf{w}$. Therefore, in view of Theorem \ref{thm2}, we have 
	\begin{equation}\label{form}
		\Gamma(\textbf{u},\textbf{v})=\Gamma(r\textbf{e}_i(1),(r+x)\textbf{e}_i(1))
	\end{equation}
	for all $\{\textbf{u},\,\textbf{v}\}\subset G^{(q)}_{n_1,n_2,\dots,n_q}$ and $x\in\{1,2,\dots,l\}$. Thus, the parametrization (\ref{pf1}) involves exactly $\sum_{i=1}^{q}\sum_{x=1}^{l}(n_i-x+1)$ many unique parameters of the form (\ref{form}). Moreover, it involves exactly $(n_1+1)(n_2+1)\cdots (n_q+1)$ many parameters $\alpha_{\textbf{u}}$, $\textbf{u}\in G^{(q)}_{n_1,n_2,\dots,n_q}$ which are unique up to constant multiplication.
	
	Usually, $\alpha_{\textbf{u}}$ is referred as the vertex parameter and $\Gamma(\textbf{u},\textbf{v})$ is referred as the edge parameter whenever $\textbf{u}$ and $\textbf{v}$ are connected by an edge.
\end{remark}
\begin{example}
	For $q=2$, $n_1=2$, $n_2=2$ and $l=2$, the parametrization (\ref{pf1}) reduces to the parametrization given in (\ref{matricesrank}). Moreover, 
	$h_1=\Gamma((0,0),(1,0))$, $h_2=\Gamma((1,0),(2,0))$, $h_3=\Gamma((0,0),(2,0))$, $v_1=\Gamma((0,0),(0,1))$, $v_2=\Gamma((0,1),(0,2))$, $v_3=\Gamma((0,0),(0,2))$ and
	$\alpha_{(i,j)}=b_{i,j}$ for all $(i,j)\in \{(0,0),(0,1),(0,2),(1,0),(1,1),(1,2),(2,0),(2,1),(2,2)\}$.
\end{example}

Note that for $l_1=l_2$, if $p(\textbf{u},\textbf{v})>0$ whenever $\textbf{u}\leftrightarrow\textbf{v}$ then there are some redundancies in the system of equations (\ref{multequ}), that is, any three of the equalities given in (\ref{multequ}) deduce the fourth one. 
Also, all the equations given in (\ref{multequ}) are of the form $p(\textbf{u}_1,\textbf{u}_2)p(\textbf{u}_2,\textbf{v}_1)-p(\textbf{u}_1,\textbf{v}_2)p(\textbf{v}_2,\textbf{v}_1)=0$ for all $\{\textbf{u}_1,\textbf{u}_2,\textbf{v}_1,\textbf{v}_2\}\subset G^{(q)}_{n_1,n_2,\dots,n_q}$ such that $\textbf{u}_1\leftrightarrow\textbf{u}_2$, $\textbf{u}_2\leftrightarrow\textbf{v}_1$, $\textbf{u}_1\leftrightarrow\textbf{v}_2$ and $\textbf{v}_2\leftrightarrow\textbf{v}_1$. Equivalently, these equations are of the form $p^*(\textbf{u}_1,\textbf{u}_2)+p^*(\textbf{u}_2,\textbf{v}_1)-p^*(\textbf{u}_1,\textbf{v}_2)-p^*(\textbf{v}_2,\textbf{v}_1)=0$ where $p^*(\textbf{u},\textbf{v})=\log p(\textbf{u},\textbf{v})$.

Next, for $l_1=l_2=l$, we identify the constraints that are independent. First, we define two matrices and fix some notations.

\subsection{Notations}Let us consider a matrix $\mathcal{Q}_{n_1,n_2,\dots,n_q}^{l}$ that has one row for each constraint in (\ref{multequ}) and whose columns are indexed by pairs $(\textbf{u},\textbf{v})\in{G}^{(q)}\times{G}^{(q)}$ whenever $\textbf{u}\leftrightarrow\textbf{v}$. Here, the row of $\mathcal{Q}_{n_1,n_2,\dots,n_q}^l$ corresponding to the constraint $p(\textbf{u}_1,\textbf{u}_2)p(\textbf{u}_2,\textbf{v}_1)=p(\textbf{u}_1,\textbf{v}_2)p(\textbf{v}_2,\textbf{v}_1)$ has entries $1$, $1$, $-1$, $-1$ in columns corresponding to the pairs $(\textbf{u}_1,\textbf{u}_2)$, $(\textbf{u}_2,\textbf{v}_1)$, $(\textbf{u}_1,\textbf{v}_2)$, $(\textbf{v}_2,\textbf{v}_1)$, respectively, and zero elsewhere.

Also, consider a matrix $\mathcal{R}_{n_1,n_2,\dots,n_q}^{l}$ which has a column for each order pairs $(\textbf{u},\textbf{v})\in G^{(q)}_{n_1,n_2,\dots,n_q}\times G^{(q)}_{n_1,n_2,\dots,n_q}$ whenever $\textbf{u}\leftrightarrow\textbf{v}$ and it has a row corresponding to each parameter mentioned in Remark \ref{3.1}. In $\mathcal{R}_{n_1,n_2,\dots,n_q}^{l}$, the column associated with the pair $(\textbf{u},\textbf{v})$ has entries $1$, $-1$, $1$ corresponding to the parameters $\alpha_{\textbf{u}}$, $\alpha_{\textbf{v}}$, $\Gamma(\textbf{u},\textbf{v})$, respectively, and zero elsewhere.
\begin{remark}
	The orders of matrices $\mathcal{Q}^l_{n_1,n_2,\dots,n_q}$ and $\mathcal{R}^l_{n_1,n_2,\dots,n_q}$ are
	{\small\begin{equation*}\label{OQ}
		\bigg(4\sum_{1\leq j<i\leq q}\sum_{x=1}^{l}\sum_{y=1}^{l}(n_j-y+1)(n_i-x+1)\prod_{k\ne i,j}(n_k+1)\bigg)\times\bigg(2\sum_{i=1}^{q}\sum_{x=1}^{l}(n_i-x+1)\prod_{j\ne i}(n_j+1)\bigg)
	\end{equation*}}
	and
	{\small\begin{equation*}\label{OR}
		\bigg(\sum_{i=1}^{q}\sum_{x=1}^{l}(n_i-x+1)+\prod_{i=1}^{q}(n_i+1)\bigg)\times\bigg(2\sum_{i=1}^{q}\sum_{x=1}^{l}(n_i-x+1)\prod_{j\ne i}(n_j+1)\bigg),
	\end{equation*}}
	respectively. In particular, the order of matrix $\mathcal{Q}^1_{n_1,n_2,\dots,n_q}$ is
	{\small\begin{equation*}\label{f1}
		\bigg(4\sum_{1\leq j<i\leq q}n_jn_i\prod_{k\neq i,j}(n_k+1)\bigg)\times \bigg(2\sum_{i=1}^{q}n_i\prod_{j\ne i}(n_j+1)\bigg)
	\end{equation*}}
	and the order of $\mathcal{R}^1_{n_1,n_2,\dots,n_q}$ is given by
	{\small\begin{equation*}\label{ff2}
		\bigg(\sum_{i=1}^{q}n_i+\prod_{i=1}^{q}(n_i+1)\bigg)\times\bigg(2\sum_{i=1}^{q}n_i\prod_{j\ne i}(n_j+1)\bigg).
	\end{equation*}}
	This agrees with the results obtained in Remark 3.5 and Remark 3.8 of Evans \textit{et al.} (2010), respectively.
\end{remark}
	
\begin{example}
	For $q=2$, $l=2$, $n_1=2$ and $n_2=2$, the orders of matrices $\mathcal{Q}_{2,2}^2$ and $\mathcal{R}_{2,2}^2$ are $36\times 36$ and $15\times36$, respectively. Let $E$ be the collection of all ordered pairs $(\textbf{u},\textbf{v})\in
		\{(0,0),(0,1),(0,2),(1,0),(1,1),(1,2),(2,0), (2,1), (2,2)\}^2
$ such that $\textbf{u}\leftrightarrow\textbf{v}$. Then, the cardinality of $E$ is $36$. 

 Let $\mathscr{R}=(r_t,\,t\in E)$ and $\mathscr{R}'=(r'_t,\,t\in E)$ be the rows of $\mathcal{Q}_{2,2}^2$ associated with the constraints  $r_{0,0}(1)u_{1,0}(1)=u_{0,0}(1)r_{0,1}(1)$ and $u_{1,0}(1)l_{1,1}(1)=l_{1,0}(1)u_{0,0}(1)$, respectively. Then,
	\begin{equation*}
		r_t=\begin{cases}
			1,\ t\in\{(0,0)\times(1,0),(1,0)\times(1,1)\},\\
			-1,\ t\in\{(0,0)\times(0,1),(0,1)\times(1,1)\},\\
			0,\ \mathrm{otherwise}
		\end{cases}
	\end{equation*}
	and
	\begin{equation*}
		r'_t=\begin{cases}
			1,\ t\in\{(1,0)\times(1,1),(1,1)\times(0,1)\},\\
			-1,\ t\in\{(1,0)\times(0,0),(0,0)\times(0,1)\},\\
			0,\ \mathrm{otherwise}.
		\end{cases}
	\end{equation*}
	Thus, the dot product of vectors $\mathscr{R}$ and $\mathscr{R}'$ is equal to $2$, that is, $\mathscr{R}\cdot\mathscr{R}'=2$. Now, let us consider another row $\mathscr{R}{''}=(r_t{''},\,t\in E)$ of $\mathcal{Q}_{2,2}^2$ associated with the constraint $l_{2,1}(2)d_{0,1}(1)=l_{2,0}(2)d_{2,1}(1)$. Then,
	\begin{equation*}
		r_t{''}=\begin{cases}
			1,\ t\in\{(2,1)\times(0,1), (0,1)\times(0,0)\},\\
			-1,\ t\in\{(2,0)\times(0,0), (2,1)\times(2,0)\},\\
			0,\ \mathrm{otherwise}.
		\end{cases}
	\end{equation*}
	Here, $\mathscr{R}{''}\cdot\mathscr{R}=0$ and $\mathscr{R}{''}\cdot\mathscr{R}'=0$. Therefore, we conclude that some rows of $\mathcal{Q}_{2,2}^2$ are orthogonal to each other but not all.
	
	 Let $\mathscr{R}_{\textbf{u}}=(r_t^{\textbf{u}},\,t\in E)$ be the row of $\mathcal{R}_{2,2}^2$ associated with vertex parameter $\alpha_{\textbf{u}}$ for all $\textbf{u}\in\{(0,0),(0,1),(0,2),(1,0)$, $(1,1),(1,2),(2,0), (2,1), (2,2)\}$. Then, we have
	\begin{align*}
		r_t^{(0,0)}&=\begin{cases}
			1,\ t\in\{(0,0)\times(1,0), (0,0)\times(0,1), (0,0)\times(2,0), (0,0)\times(0,2)\},\\
			-1,\ t\in\{(1,0)\times(0,0), (0,1)\times(0,0), (2,0)\times(0,0), (0,2)\times(0,0)\},\\
			0,\ \mathrm{otherwise},
		\end{cases}\\
		r_t^{(0,1)}&=\begin{cases}
			1,\ t\in\{(0,1)\times(0,0), (0,1)\times(1,1), (0,1)\times(2,1), (0,1)\times(0,2)\},\\
			-1,\ t\in\{(0,0)\times(0,1), (1,1)\times(0,1), (2,1)\times(0,1), (0,2)\times(0,1)\},\\
			0,\ \mathrm{otherwise},
		\end{cases}
	\end{align*}
and
\begin{equation*}
	r_t^{(2,2)}=\begin{cases}
		1,\ t\in\{(2,2)\times(2,0), (2,2)\times(0,2), (2,2)\times(1,2), (2,2)\times(2,1)\},\\
		-1,\ t\in\{(2,0)\times(2,2), (0,2)\times(2,2), (1,2)\times(2,2), (2,1)\times(2,2)\},\\
		0,\ \mathrm{otherwise}.
	\end{cases}
\end{equation*}
Here, $\mathscr{R}_{(0,0)}\cdot\mathscr{R}_{(0,1)}=-2$, $\mathscr{R}_{(0,0)}\cdot\mathscr{R}_{(2,2)}=0$ and $\mathscr{R}_{(0,1)}\cdot\mathscr{R}_{(2,2)}=0$. Note that $\mathscr{R}_{\textbf{u}}\cdot\mathscr{R}_{\textbf{v}}=0$ if $\textbf{u}\nleftrightarrow\textbf{v}$ and $\mathscr{R}_{\textbf{u}}\cdot\mathscr{R}_{\textbf{v}}=-2$ if $\textbf{u}\leftrightarrow\textbf{v}$. Also, the dot products of $\mathscr{R}_{(0,0)}$, $\mathscr{R}_{(0,1)}$ and $\mathscr{R}_{(2,2)}$ with the vectors in span of the rows of $\mathcal{Q}^{(2)}_{2,2}$ are all $0$. In Proposition \ref{thm3}, we shall show that this result is true for any finite dimensional finite grid.
\end{example}

Next, we aim to determine the maximal number of independent constraints required for the commutation of transition probability matrices when $p(\textbf{u},\textbf{v})>0$. To achieve this, we determine the dimension of vector space spanned by the rows of $\mathcal{Q}^l_{n_1,n_2,\dots,n_q}$. That is, we need to determine the rank of matrix  $\mathcal{Q}^l_{n_1,n_2,\dots,n_q}$.
	
Let $\mathcal{V}_{\mathcal{Q}}$ and $\mathcal{V}_{\mathcal{R}}$ be the vector spaces spanned by the rows of matrices $\mathcal{Q}^l_{n_1,n_2,\dots,n_q}$ and $\mathcal{R}^l_{n_1,n_2,\dots,n_q}$, respectively. Then, we have the following result that will be useful to determine the maximal number of linearly independent rows of the constraint matrix $\mathcal{Q}^l_{n_1,n_2,\dots,n_q}$.
Its proof follows similar lines to the case of $l=1$ (see Corollary 3.10, Evans \textit{et al.} (2010)). Thus, it is omitted.
\begin{proposition}\label{thm3}
	The vector spaces $\mathcal{V}_{\mathcal{Q}}$ and $\mathcal{V}_{\mathcal{R}}$ are orthogonal complements of each other. 
\end{proposition}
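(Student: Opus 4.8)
The plan is to establish the single identity
\[
\ker\mathcal{Q}^{l}_{n_1,n_2,\dots,n_q}=\mathcal{V}_{\mathcal{R}},
\]
from which the proposition follows immediately. Write $E=\{(\textbf{u},\textbf{v}):\textbf{u},\textbf{v}\in G^{(q)}_{n_1,n_2,\dots,n_q},\ \textbf{u}\leftrightarrow\textbf{v}\}$, so that the columns of both $\mathcal{Q}^{l}_{n_1,n_2,\dots,n_q}$ and $\mathcal{R}^{l}_{n_1,n_2,\dots,n_q}$ are indexed by $E$ and the spaces $\mathcal{V}_{\mathcal{Q}},\mathcal{V}_{\mathcal{R}}$ are subspaces of $\mathbb{R}^{E}$. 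For any real matrix the row space and the null space are orthogonal complements of each other, so $\mathcal{V}_{\mathcal{Q}}^{\perp}=\ker\mathcal{Q}^{l}_{n_1,n_2,\dots,n_q}$; once this kernel is identified with $\mathcal{V}_{\mathcal{R}}$, taking orthogonal complements once more in the finite-dimensional space $\mathbb{R}^{E}$ gives $\mathcal{V}_{\mathcal{Q}}=\mathcal{V}_{\mathcal{R}}^{\perp}$, which is the claim. Thus I would reduce the proof to the two inclusions $\mathcal{V}_{\mathcal{R}}\subseteq\ker\mathcal{Q}^{l}_{n_1,n_2,\dots,n_q}$ and $\ker\mathcal{Q}^{l}_{n_1,n_2,\dots,n_q}\subseteq\mathcal{V}_{\mathcal{R}}$.

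For the inclusion $\mathcal{V}_{\mathcal{R}}\subseteq\ker\mathcal{Q}^{l}_{n_1,n_2,\dots,n_q}$, I would verify directly that every row of $\mathcal{Q}^{l}_{n_1,n_2,\dots,n_q}$ is orthogonal to every row of $\mathcal{R}^{l}_{n_1,n_2,\dots,n_q}$. Each row of $\mathcal{Q}^{l}_{n_1,n_2,\dots,n_q}$ comes from one of the four families of equations in (\ref{multequ}); such an equation $p(\textbf{u}_1,\textbf{u}_2)p(\textbf{u}_2,\textbf{v}_1)=p(\textbf{u}_1,\textbf{v}_2)p(\textbf{v}_2,\textbf{v}_1)$ is attached to a $4$-cycle of the grid in which the edges $\{\textbf{u}_1,\textbf{u}_2\}$ and $\{\textbf{v}_2,\textbf{v}_1\}$ are parallel and of the same step size, and likewise the edges $\{\textbf{u}_2,\textbf{v}_1\}$ and $\{\textbf{u}_1,\textbf{v}_2\}$. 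By Remark \ref{3.1} the two parallel edges in each pair carry the same $\Gamma$-parameter, and in the corresponding row of $\mathcal{Q}^{l}_{n_1,n_2,\dots,n_q}$ the two columns carrying that parameter appear with opposite signs (one $+1$, one $-1$), so the dot product of that row with any $\Gamma$-row of $\mathcal{R}^{l}_{n_1,n_2,\dots,n_q}$ vanishes; and for any fixed vertex $\textbf{w}$ the net multiplicity of $\textbf{w}$ as a left endpoint minus as a right endpoint is the same on both sides of the equation (it equals $+1$ if $\textbf{w}=\textbf{u}_1$, equals $-1$ if $\textbf{w}=\textbf{v}_1$, and $0$ otherwise), so the dot product with the $\alpha_{\textbf{w}}$-row of $\mathcal{R}^{l}_{n_1,n_2,\dots,n_q}$ also vanishes. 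Running through the four types of constraint yields $\mathcal{V}_{\mathcal{R}}\subseteq(\mathcal{V}_{\mathcal{Q}})^{\perp}=\ker\mathcal{Q}^{l}_{n_1,n_2,\dots,n_q}$.

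For the reverse inclusion I would invoke Theorem \ref{thm2}. Given $\textbf{x}\in\ker\mathcal{Q}^{l}_{n_1,n_2,\dots,n_q}$, set $p(\textbf{u},\textbf{v})=\exp(x_{(\textbf{u},\textbf{v})})>0$ for $(\textbf{u},\textbf{v})\in E$. Each defining relation of $\mathcal{Q}^{l}_{n_1,n_2,\dots,n_q}$ has the form $x_a+x_b-x_c-x_d=0$, which upon exponentiation is exactly one of the equations in (\ref{multequ}); hence $p$ satisfies the whole system (\ref{multequ}), so $P_1,\dots,P_q$ commute, and since $p(\textbf{u},\textbf{v})>0$ whenever $\textbf{u}\leftrightarrow\textbf{v}$, Theorem \ref{thm2} gives the representation (\ref{pf1}), $p(\textbf{u},\textbf{v})=\alpha_{\textbf{u}}\Gamma(\textbf{u},\textbf{v})\alpha_{\textbf{v}}^{-1}$, with the edge constants $\Gamma(\textbf{u},\textbf{v})$ enjoying the invariance recorded in Remark \ref{3.1}; in particular $\Gamma(\textbf{u},\textbf{v})$ depends only on the parameter labelling the row of $\mathcal{R}^{l}_{n_1,n_2,\dots,n_q}$ attached to the edge $\{\textbf{u},\textbf{v}\}$. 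Taking logarithms, $x_{(\textbf{u},\textbf{v})}=\log\alpha_{\textbf{u}}-\log\alpha_{\textbf{v}}+\log\Gamma(\textbf{u},\textbf{v})$ for all $(\textbf{u},\textbf{v})\in E$, which says precisely that $\textbf{x}$ is the linear combination of the rows of $\mathcal{R}^{l}_{n_1,n_2,\dots,n_q}$ with coefficients $\log\alpha_{\textbf{u}}$ and $\log\Gamma(\textbf{u},\textbf{v})$; that is, $\textbf{x}\in\mathcal{V}_{\mathcal{R}}$. Combining the two inclusions gives $\ker\mathcal{Q}^{l}_{n_1,n_2,\dots,n_q}=\mathcal{V}_{\mathcal{R}}$ and the proposition follows. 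I expect the only real obstacle to be bookkeeping: one must verify the sign cancellations in the first inclusion uniformly across all four forms of (\ref{multequ}), and match the ``collapsed'' edge parameters of $\mathcal{R}^{l}_{n_1,n_2,\dots,n_q}$ from Remark \ref{3.1} with the invariance of $\Gamma$ supplied by Theorem \ref{thm2}; the substantive content is entirely carried by Theorem \ref{thm2}.
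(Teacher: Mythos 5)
Your proposal is correct and is essentially the argument the paper has in mind: the paper omits the proof and defers to Corollary 3.10 of Evans \emph{et al.} (2010), whose proof is exactly your two inclusions --- direct orthogonality of each constraint row of $\mathcal{Q}^{l}_{n_1,n_2,\dots,n_q}$ against the vertex and (collapsed) edge rows of $\mathcal{R}^{l}_{n_1,n_2,\dots,n_q}$, and, conversely, exponentiation of a kernel vector followed by the parametrization of Theorem \ref{thm2} and Remark \ref{3.1}. The only point to tidy is that $p(\textbf{u},\textbf{v})=\exp(x_{(\textbf{u},\textbf{v})})$ need not be sub-stochastic, so before invoking Theorem \ref{thm2} you should rescale by a small constant $c>0$ (both sides of every relation in (\ref{multequ}) are homogeneous of degree two, so the system is preserved, and the factor $c$ is absorbed into the $\Gamma$'s in (\ref{pf1})); with that one-line patch the argument is complete.
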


The following lemma gives the rank of matrix $\mathcal{R}^l_{n_1,n_2,\dots,n_q}$ that together with Proposition \ref{thm3} determine the rank of matrix $\mathcal{Q}^l_{n_1,n_2,\dots,n_q}$.
\begin{lemma}\label{lemma1}
	The rank of matrix $\mathcal{R}^l_{n_1,n_2,\dots,n_q}$ is given by
	\begin{equation}\label{r1}
		\mathrm{rank}(\mathcal{R}^l_{n_1,n_2,\dots,n_q})=l\sum_{i=1}^{q}n_i+\prod_{i=1}^{q}(n_i+1)-\frac{q(l-1)l}{2}-1.
	\end{equation}
\end{lemma}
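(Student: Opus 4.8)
The plan is to read off $\mathrm{rank}(\mathcal{R}^l_{n_1,n_2,\dots,n_q})$ from the two natural row-blocks of the matrix and to show that their ranks add. Write $n=\prod_{i=1}^{q}(n_i+1)$ for the number of vertices of $G^{(q)}_{n_1,n_2,\dots,n_q}$, and split the rows of $\mathcal{R}^l_{n_1,n_2,\dots,n_q}$ into the block $N$ of rows indexed by the vertex parameters $\alpha_{\textbf{u}}$ and the block $M$ of rows indexed by the edge parameters described in Remark \ref{3.1}. By Remark \ref{3.1}, $M$ has $\sum_{i=1}^{q}\sum_{x=1}^{l}(n_i-x+1)=l\sum_{i=1}^{q}n_i-\frac{q(l-1)l}{2}$ rows while $N$ has $n$ rows, and these two counts add up to the total number of rows of $\mathcal{R}^l_{n_1,n_2,\dots,n_q}$. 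So proving (\ref{r1}) reduces to establishing $\mathrm{rank}(M)=l\sum_{i=1}^{q}n_i-\frac{q(l-1)l}{2}$, $\mathrm{rank}(N)=n-1$, and $\mathrm{rank}(\mathcal{R}^l_{n_1,n_2,\dots,n_q})=\mathrm{rank}(N)+\mathrm{rank}(M)$, which I would handle in turn.

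For $M$: every column of $\mathcal{R}^l_{n_1,n_2,\dots,n_q}$, namely the one indexed by a pair $(\textbf{u},\textbf{v})$ with $\textbf{u}\leftrightarrow\textbf{v}$, has inside $M$ a single nonzero entry, equal to $1$, lying in the row of the edge parameter $\Gamma(\textbf{u},\textbf{v})$. I would check that every row of $M$ is nonzero: for each $i$, each $x\in\{1,\dots,l\}$ and each $r\in\{0,1,\dots,n_i-x\}$ the vertices $r\textbf{e}_i(1)$ and $(r+x)\textbf{e}_i(1)$ both lie in $G^{(q)}_{n_1,n_2,\dots,n_q}$ and are joined by an edge (this is where the hypothesis $l<\min_i n_i$ enters), so by Remark \ref{3.1} the corresponding row of $M$ contains that column in its support. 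Since distinct rows of $M$ have disjoint supports --- they record disjoint classes of parallel edges of a fixed length in a fixed coordinate direction --- the rows of $M$ are linearly independent, giving $\mathrm{rank}(M)$ equal to the number of its rows.

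For $N$: the column indexed by $(\textbf{u},\textbf{v})$ has $+1$ in the row of $\alpha_{\textbf{u}}$, $-1$ in the row of $\alpha_{\textbf{v}}$ and zeros elsewhere, so $N$ is (up to each undirected edge contributing both of its orientations, which merely doubles columns) an oriented incidence matrix of the graph $\mathcal{G}$ of $G^{(q)}_{n_1,n_2,\dots,n_q}$. I would then compute $NN^{T}=2L(\mathcal{G})$ and invoke Remark \ref{prem1} to get $\mathrm{rank}(N)=\mathrm{rank}(NN^{T})=\mathrm{rank}(L(\mathcal{G}))$. Because $\mathcal{G}$ contains every unit grid edge it is connected, so Lemma \ref{plemm1} gives $\mathrm{rank}(L(\mathcal{G}))=n-1$. (Equivalently, the left kernel of $N$ is the space of vertex-functions constant on the connected components of $\mathcal{G}$, which here is one-dimensional.)

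Finally, to see that the ranks add it suffices to show that the row space of $N$ and the row space of $M$ meet only in $\textbf{0}$, since the row space of $\mathcal{R}^l_{n_1,n_2,\dots,n_q}$ is their sum. The key observation is a parity one: every vector in the row space of $N$ has the form $(\textbf{u},\textbf{v})\mapsto h(\textbf{u})-h(\textbf{v})$ for some function $h$ on vertices, hence is antisymmetric under reversing the ordered pair $(\textbf{u},\textbf{v})\mapsto(\textbf{v},\textbf{u})$, whereas every row of $M$, and so every vector in its row space, is symmetric under this reversal because $\Gamma(\textbf{u},\textbf{v})=\Gamma(\textbf{v},\textbf{u})$; a vector that is both symmetric and antisymmetric is $\textbf{0}$. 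Combining the three facts gives $\mathrm{rank}(\mathcal{R}^l_{n_1,n_2,\dots,n_q})=(n-1)+\big(l\sum_{i=1}^{q}n_i-\frac{q(l-1)l}{2}\big)$, which is (\ref{r1}). I expect the symmetry/antisymmetry decoupling of the two blocks --- and, to a lesser extent, the bookkeeping ensuring that none of the rows of $M$ vanish --- to be the crux; the incidence-matrix computation and the counting are routine. As a shortcut that avoids the block decomposition, one can instead compute directly that the left kernel of $\mathcal{R}^l_{n_1,n_2,\dots,n_q}$ is one-dimensional: a left-kernel relation, read off the column $(\textbf{u},\textbf{v})$, forces the coefficients of the rows $\alpha_{\textbf{u}}$ and $\alpha_{\textbf{v}}$ to be equal for every edge, hence all equal by connectedness, and then forces every coefficient of a row $\Gamma(\textbf{u},\textbf{v})$ to vanish; so the rank is one less than the number of rows, which is again (\ref{r1}).
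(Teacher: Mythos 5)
Your proof is correct, and it reaches the paper's formula by the same basic decomposition (vertex-parameter rows versus edge-parameter rows, with the vertex block tied to the graph Laplacian) but with genuinely different mechanics in two of the three steps. The paper works throughout with the Gram matrix $\mathcal{R}^l_{n_1,\dots,n_q}(\mathcal{R}^l_{n_1,\dots,n_q})^T$: it computes all pairwise dot products of rows, concludes that this Gram matrix is block diagonal with blocks $\mathscr{U}\mathscr{U}^T=2(\mathscr{D}-\mathscr{A})$ (twice the Laplacian, rank $\prod_i(n_i+1)-1$ by connectedness and Lemma \ref{plemm1}) and $\mathscr{V}_i\mathscr{V}_i^T=2\prod_{j\ne i}(n_j+1)I$, and then adds ranks via Remarks \ref{prem1} and \ref{prem2}; the vanishing of the mixed blocks $\mathscr{U}\mathscr{V}_i^T$ is obtained by a direct $(+1)+(-1)$ cancellation (and is also a consequence of Proposition \ref{thm3}). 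You only use the Gram computation for the vertex block $N$ (where $NN^T=2L$, exactly as in the paper), handle the edge block $M$ more cheaply by observing that each column meets $M$ in a single entry $1$, so the edge rows have disjoint supports and are automatically independent (your check that no such row is empty plays the role of the paper's count $\mathrm{rank}(\mathscr{V}_i\mathscr{V}_i^T)=\sum_{x=1}^l(n_i-x+1)$), and replace the block-orthogonality step by the symmetric/antisymmetric splitting under the column involution $(\textbf{u},\textbf{v})\mapsto(\textbf{v},\textbf{u})$, which gives trivial intersection of the two row spaces and hence additivity of ranks. This buys a slightly more elementary and self-contained argument (no appeal to Proposition \ref{thm3} and less Gram bookkeeping), while the paper's version has the advantage of producing the explicit spectral-type information $\mathscr{V}_i\mathscr{V}_i^T=2\prod_{j\ne i}(n_j+1)I$ along the way. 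Your left-kernel shortcut is also valid and is arguably the cleanest route, since it bypasses both the Laplacian and the block decomposition; just note that a single column $(\textbf{u},\textbf{v})$ only yields $c_{\alpha_{\textbf{u}}}-c_{\alpha_{\textbf{v}}}+c_{\Gamma(\textbf{u},\textbf{v})}=0$, so you must first pair it with the reversed column $(\textbf{v},\textbf{u})$ (which carries the same edge parameter, as $\Gamma(\textbf{u},\textbf{v})=\Gamma(\textbf{v},\textbf{u})$) to conclude $c_{\Gamma(\textbf{u},\textbf{v})}=0$ and $c_{\alpha_{\textbf{u}}}=c_{\alpha_{\textbf{v}}}$, and only then invoke connectedness; with that ordering fixed, the one-dimensionality of the left kernel and hence \eqref{r1} follow.
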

\begin{proof}
	In view of Remark \ref{prem1}, it is sufficient to obtain the rank of matrix $\mathcal{R}^l_{n_1,n_2,\dots,n_q}{\mathcal{R}^l_{n_1,n_2,\dots,n_q}}^T$. Let $R_{\textbf{u}}$ be the row vector of $\mathcal{R}^l_{n_1,n_2,\dots,n_q}$ corresponding to the parameter $\alpha_{\textbf{u}}$, $\textbf{u}\in G^{(q)}_{n_1,n_2,\dots,n_q}$. The vertex parameter $\alpha_{\textbf{u}}$ appears in the transition probabilities $p(\textbf{u},\textbf{v})$ and $p(\textbf{v},\textbf{u})$ whenever $\textbf{u}\leftrightarrow\textbf{v}$. So, the dot product $R_{\textbf{u}}\cdot R_{\textbf{u}}$ is equal to the total number of all such vertex $\textbf{v}\in G^{(q)}_{n_1,n_2,\dots,n_q}$, that is, it is equal to twice of the degree of vertex $\textbf{u}$. Moreover, $R_{\textbf{u}}\cdot R_{\textbf{v}}=0$ if $\textbf{u}\nleftrightarrow\textbf{v}$ and  $R_{\textbf{u}}\cdot R_{\textbf{v}}=-2$ if $\textbf{u}\leftrightarrow\textbf{v}$.
	
	Let $R_i^{r,x}$ denote the row vector of  $\mathcal{R}^l_{n_1,n_2,\dots,n_q}$ corresponding to the edge parameter $\Gamma(r\textbf{e}_i(1),(r+x)\textbf{e}_i(1))$, where $r\in\{0,1,\dots,n_i-x\}$ and $x\in\{1,2,\dots,l\}$. Note that the parameter $\Gamma(r\textbf{e}_i(1),(r+x)\textbf{e}_i(1))$, $i\in\{1,2,\dots,q\}$ appears in two transition probabilities $p(r\textbf{e}_i(1)+w,(r+x)\textbf{e}_i(1)+w)$ and $p((r+x)\textbf{e}_i(1)+w,r\textbf{e}_i(1)+w)$ for $\prod_{j\neq i}(n_j+1)$ many values of $w\in\{\sum_{j\ne i}a_{j}\textbf{e}_j(1):a_j\in\mathbb{Z}\}$. So, 
	\begin{equation}\label{lpf1}
	R_i^{r,x}\cdot R_i^{r,x}=2\prod_{j\ne i}(n_j+1).
	\end{equation}
	
	 Every transition probability $p(\textbf{u},\textbf{v})$ involves only one edge parameter $\Gamma(\textbf{u},\textbf{v})$. Hence, $R_i^{r,x}\cdot R_j^{r',y}=0$ for $i\neq j$, $r'\in\{1,2,\dots,n_j-y\}$ and $y\in\{1,2,\dots,l\}$. Here, $R_j^{r',y}$ is the row vector of $\mathcal{R}_{n_1,n_2,\dots,n_q}^l$ corresponding to the edge parameter $\Gamma(r'\textbf{e}_j,(r'+y)\textbf{e}_j)$.
	
	It is clear that $R_{\textbf{u}}\cdot R_i^{r,x}=0$ for all $x\in\{1,2,\dots,l\}$ unless the vector $\textbf{u}$ is either the starting or ending point of the transition that involves the edge parameter corresponding to the vector $R_i^{r,x}$, that is, $\textbf{u}$ belongs to the following set: 
	{\small\begin{equation*}
		\Theta=\Bigg\{r\textbf{e}_i(1)+w:w\in\bigg\{\sum_{j\ne i}a_{j}\textbf{e}_j(1):a_j\in\mathbb{Z}\bigg\}\Bigg\}\bigcup\Bigg\{(r+x)\textbf{e}_i(1)+w:w\in\bigg\{\sum_{j\ne i}a_{j}\textbf{e}_j(1):a_j\in\mathbb{Z}\bigg\}\Bigg\}.
	\end{equation*}}
	 Even for $\textbf{u}\in\Theta$, we have $R_{\textbf{u}}\cdot R_i^{r,x}=(1\cdot1)+(-1)\cdot1=0$, where $1$ and $-1$ are the entries associated to the transition probabilities $p(\textbf{u},\textbf{v})$ and $p(\textbf{v},\textbf{u})$, respectively.
	
	Now, let $\mathscr{U}$ be the submatrix of $\mathcal{R}^l_{n_1,n_2,\dots,n_q}$ that corresponds to the rows associated with the vertex parameters $\alpha_{\textbf{u}}$,  $\textbf{u}\in G^{(q)}_{n_1,n_2,\dots,n_q}$, and let $\mathscr{V}_1,\ \mathscr{V}_2,\dots,\mathscr{V}_q$ are the submatrices of  $\mathcal{R}^l_{n_1,n_2,\dots,n_q}$ that correspond to the rows associated with edge parameters in the direction vectors $\textbf{e}_1(1)$, $ \textbf{e}_2(1)$, $\dots$, $\textbf{e}_q(1)$, respectively. On using (\ref{lpf1}), it can be shown that $\mathscr{V}_i\mathscr{V}_i^T=2\prod_{j\ne i}(n_j+1)I$ for all $i=1,2,\dots,q$ where $I$ is an identity matrix of order equal to total number of edge parameters $\Gamma(r\textbf{e}_i(1),(r+x)\textbf{e}_i(1))$, $r\in\{0,1,\dots,n_i-x\}$, $1\leq x\leq l$. Thus, the rank of matrix $\mathscr{V}_i\mathscr{V}_i^T$ is $\sum_{x=1}^{l}(n_i-x+1)$. Moreover, $\mathscr{V}_i\mathscr{V}_j^T$ is equal to $0$ whenever $i\ne j$ and in view of Proposition \ref{thm3}, we have $\mathscr{U}\mathscr{V}_i^T=0$ for all $i$. Also, we have $\mathscr{U}\mathscr{U}^T=2(\mathscr{D}-\mathscr{A}(G^{(q)}_{n_1,n_2,\dots,n_q}))$, where $\mathscr{D}$ is a diagonal matrix whose diagonal entries are the degrees of corresponding vertices and $\mathscr{A}(G^{(q)}_{n_1,n_2,\dots,n_q})$ is the adjacency matrix of the graph $G^{(q)}_{n_1,n_2,\dots,n_q}$. Hence, $\mathscr{U}\mathscr{U}^T$ is the constant multiple of the Laplacian of $G^{(q)}_{n_1,n_2,\dots,n_q}$. Note that $G^{(q)}_{n_1,n_2,\dots,n_q}$ is a connected graph. From Lemma \ref{plemm1}, the rank of $\mathscr{U}\mathscr{U}^T$ is equal to one less than the total number of vertices, that is, $\mathrm{rank}(\mathscr{U}\mathscr{U}^T)=\prod_{i=1}^{q}(n_i+1)-1$. 
	
	So, we deduce that  $\mathcal{R}^l_{n_1,n_2,\dots,n_q}{\mathcal{R}^l_{n_1,n_2,\dots,n_q}}^T$ is a block diagonal matrix with block matrices $\mathscr{U}\mathscr{U}^T$, $\mathscr{V}_1\mathscr{V}_1^T,\dots,\mathscr{V}_{q-1}\mathscr{V}_{q-1}^T$ and $\mathscr{V}_q\mathscr{V}_{q}^T$. In view of Remark \ref{prem2}, the rank of $\mathcal{R}^l_{n_1,n_2,\dots,n_q}{\mathcal{R}^l_{n_1,n_2,\dots,n_q}}^T$ is given by
	\begin{align*}
		\mathrm{rank}(\mathcal{R}^l_{n_1,n_2,\dots,n_q}{\mathcal{R}^l_{n_1,n_2,\dots,n_q}}^T)&=\sum_{i=1}^{q}\mathrm{rank}({\mathscr{V}_i\mathscr{V}_i^T})+\mathrm{rank}(\mathscr{U}\mathscr{U}^T)\\
		&=\sum_{i=1}^{q}\sum_{x=1}^{l}(n_i-x+1)+\prod_{i=1}^{q}(n_i+1)-1\\
		&=l\sum_{i=1}^{q}n_i-q\sum_{x=1}^{l}(x-1)+\prod_{i=1}^{q}(n_i+1)-1.
	\end{align*} 
	This completes the proof.
\end{proof}
\begin{theorem}
	The rank of matrix $\mathcal{Q}^l_{n_1,n_2,\dots,n_q}$ is
	{\small\begin{equation}\label{r2}
		\mathrm{rank}(\mathcal{Q}^l_{n_1,n_2,\dots,n_q})=2\sum_{i=1}^{q}\sum_{x=1}^{l}(n_i-x+1)\prod_{j\ne i}(n_j+1)-l\sum_{i=1}^{q}n_i-\prod_{i=1}^{q}(n_i+1)+\frac{q(l-1)l}{2}+1.
	\end{equation}}
\end{theorem}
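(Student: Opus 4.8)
The plan is to read off the rank of $\mathcal{Q}^l_{n_1,n_2,\dots,n_q}$ directly from Proposition \ref{thm3} and Lemma \ref{lemma1}. The key observation is that $\mathcal{Q}^l_{n_1,n_2,\dots,n_q}$ and $\mathcal{R}^l_{n_1,n_2,\dots,n_q}$ have their columns indexed by the \emph{same} set, namely the ordered pairs $(\textbf{u},\textbf{v})\in G^{(q)}_{n_1,n_2,\dots,n_q}\times G^{(q)}_{n_1,n_2,\dots,n_q}$ with $\textbf{u}\leftrightarrow\textbf{v}$. Consequently the row spaces $\mathcal{V}_{\mathcal{Q}}$ and $\mathcal{V}_{\mathcal{R}}$ both sit inside the same Euclidean space $\mathbb{R}^N$, where $N$ is the number of such ordered pairs. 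By Proposition \ref{thm3} they are orthogonal complements of one another in $\mathbb{R}^N$, hence
\[
\mathrm{rank}(\mathcal{Q}^l_{n_1,n_2,\dots,n_q})=\dim\mathcal{V}_{\mathcal{Q}}=N-\dim\mathcal{V}_{\mathcal{R}}=N-\mathrm{rank}(\mathcal{R}^l_{n_1,n_2,\dots,n_q}).
\]

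Next I would compute $N$. Fix a coordinate direction $i\in\{1,2,\dots,q\}$ and a step size $x\in\{1,2,\dots,l\}$. A pair $(\textbf{u},\textbf{v})$ with $\textbf{v}-\textbf{u}=\textbf{e}_i(x)$ and both endpoints in the grid is specified by choosing the $i$th coordinate of $\textbf{u}$ from $\{0,1,\dots,n_i-x\}$, which gives $n_i-x+1$ choices, together with the remaining $q-1$ coordinates chosen freely, which gives $\prod_{j\ne i}(n_j+1)$ choices; each such edge contributes the two ordered pairs $(\textbf{u},\textbf{v})$ and $(\textbf{v},\textbf{u})$. Summing over $i$ and $x$ yields $N=2\sum_{i=1}^{q}\sum_{x=1}^{l}(n_i-x+1)\prod_{j\ne i}(n_j+1)$, which is exactly the number of columns recorded in the orders of $\mathcal{Q}^l_{n_1,n_2,\dots,n_q}$ and $\mathcal{R}^l_{n_1,n_2,\dots,n_q}$ stated in the preceding remark.

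Finally I would substitute the value $\mathrm{rank}(\mathcal{R}^l_{n_1,n_2,\dots,n_q})=l\sum_{i=1}^{q}n_i+\prod_{i=1}^{q}(n_i+1)-\frac{q(l-1)l}{2}-1$ from Lemma \ref{lemma1} into the identity $\mathrm{rank}(\mathcal{Q})=N-\mathrm{rank}(\mathcal{R})$ and simplify, which gives (\ref{r2}). I do not expect a genuine obstacle here, since the substantive work has already been carried out in Proposition \ref{thm3} and Lemma \ref{lemma1}; the only points that need care are verifying that the two row spaces genuinely live in the same ambient $\mathbb{R}^N$ (so that ``orthogonal complement'' forces the dimensions to sum to $N$) and that the count of $N$ above agrees with the column count used for $\mathcal{R}^l_{n_1,n_2,\dots,n_q}$. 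The remaining manipulation is routine arithmetic.
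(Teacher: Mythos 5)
Your proposal is correct and follows essentially the same route as the paper: the paper's proof likewise notes that $\mathcal{V}_{\mathcal{Q}}$ sits inside $\mathbb{R}^d$ with $d=2\sum_{i=1}^{q}\sum_{x=1}^{l}(n_i-x+1)\prod_{j\ne i}(n_j+1)$ and then applies Proposition \ref{thm3} and Lemma \ref{lemma1} to get $\mathrm{rank}(\mathcal{Q}^l_{n_1,n_2,\dots,n_q})=d-\mathrm{rank}(\mathcal{R}^l_{n_1,n_2,\dots,n_q})$. Your version simply spells out the column count and the complementary-dimension step that the paper leaves implicit.
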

\begin{proof}
	Note that the space $\mathcal{V}_{\mathcal{Q}}$ is a subset of $\mathbb{R}^d$ where $d=2\sum_{i=1}^{q}\sum_{x=1}^{l}(n_i-x+1)\prod_{j\ne i}(n_j+1)$. Thus, the proof follows by using the Proposition \ref{thm3} and Lemma \ref{lemma1}.
\end{proof}

\begin{remark}
	On taking $l=1$ in (\ref{r1}) and (\ref{r2}), we get
	\begin{equation*}
		\mathrm{rank}(\mathcal{R}^1_{n_1,n_2,\dots,n_q})=\sum_{i=1}^{q}n_i+\prod_{i=1}^{q}(n_i+1)-1
	\end{equation*}
	and 
	\begin{equation*}			\mathrm{rank}(\mathcal{Q}^1_{n_1,n_2,\dots,n_q})=2\sum_{i=1}^{q}n_i\prod_{j\ne i}(n_j+1)-\sum_{i=1}^{q}n_i-\prod_{i=1}^{q}(n_i+1)+1.
	\end{equation*}
	These coincide with the results given in (3.5) and (3.6) of Evans \textit{et al.} (2010).
\end{remark}
	
\section{Concluding remarks}
	In this paper, we studied a generalized birth-death process whose state space is a finite dimensional grid. Here, the process can take a jump of finite size unlike the usual birth-death process where there can be a jump of fixed unit size at any point of time.  It is a potentially useful mathematical model for various theoretical and practical problems, for example, in the study of cell replica, in stock markets, in queuing models with multiple servers and bulk arrivals, \textit{etc}. Also, such type of processes are important as their transition probability matrices are diagonalizable, and thus, their finite step transition probability matrices can be efficiently derived. We obtained necessary and sufficient conditions under which the transition probability matrices of each coordinate direction commute with each other.
	
	Future study may include the examination of matrix $\mathcal{R}^l_{n_1,n_2,\dots,n_q}$ from the perspective of commutative algebra (see Sturmfels (1995), Eisenbud and Sturmfels (1996)). As in the case of commuting birth-death process (see Evans \textit{et al.} (2010)), we would like to study the decomposition of space of the generalized birth-death process with positive transition probabilities as a union of toric varieties.

\end{document}